\newcommand{\qed}{$\;\;\;\Box$}
\newenvironment{proof}{\par\smallbreak{\sl Proof.~}}
{\unskip\nobreak\hfill \qed \par\medbreak}
\newcounter{claim}[section]
\renewcommand{\theclaim}{\arabic{claim}}
\newenvironment{claim}{\refstepcounter{claim}%
	\par\medskip\par\noindent{\bf Claim~\theclaim.}\rm}%
{\par\medskip\par}
\newcommand{\hide}[1]{}
\newcommand{\E}{{\cal E}}
\newcommand{\N}{{\mathbb N}}
\newcommand{\R}{{\mathbb R}}
\newcommand{\Z}{{\mathbb Z}}
\newcommand{\beq}{\begin{equation}}
\newcommand{\ee}{\end{equation}}
\renewcommand{\d}{\partial}
\newtheorem{thm}{Theorem}[section]
\newtheorem{lemma}[thm]{Lemma}
\newtheorem{defn}[thm]{Definition}
\newtheorem{rem}{Remark}[section]
\newcommand{\al}{\alpha}
\newcommand{\ga}{\gamma}
\newcommand{\eps}{\varepsilon}
\newcommand{\vphi}{\varphi}
\newcommand{\om}{\omega}
\newcommand{\reff}[1]{(\ref{#1})}
\newcommand{\dd}{\!\;\mathrm{d}}
\date{}
\title{An Inverse Almost Periodic Problem for a Semilinear Strongly Damped Wave Equation}
\newcounter{thesame}
\author{
	Irina Kmit
	\thanks{Institute of Mathematics, Humboldt University of Berlin. On leave from the
		Institute for Applied Problems of Mechanics and Mathematics,
		National Academy of Sciences of	Ukraine, Lviv, Ukraine.
		{\small   E-mail:
			{\tt irina.kmit@hu-berlin.de}}}
	\ \ \ Nataliya Protsakh
	\thanks{
		Department of Computational Mathematics and Programming,  Institute of Applied Mathematics and Fundamental Sciences,
		Lviv Polytechnic National University,  Ukraine.
		{\small   E-mail:
			{\tt protsakh@ukr.net}}}
	\ \ \ Viktor Tkachenko
	\thanks{
		Institute of Mathematics,
		National Academy of Sciences of	Ukraine, Kyiv, Ukraine.
		{\small   E-mail:
			{\tt vitk@imath.kiev.ua}}}}
\begin{document}
	\maketitle
	
	\begin{abstract}
		\noindent
		This paper investigates an inverse boundary value problem for a semilinear strongly damped wave equation with Dirichlet boundary conditions in Sobolev spaces of functions bounded  in time on $\R$, including periodic and almost periodic functions. In addition to constructing a bounded strong solution, we determine a time-dependent source coefficient via an integral overdetermination condition ensuring well-posedness.
		After reducing the inverse problem to a direct one, we first establish existence and uniqueness of solutions to an associated problem on finite time intervals. We then extend these solutions to half-lines and construct a bounded strong solution on the whole real line as a limit of such extensions, and subsequently establish its uniqueness.
		In particular,  periodic and almost periodic data yield periodic and almost periodic solutions.

	\end{abstract}

	\emph{Key words:} semilinear strongly damped  wave equation, Dirichlet boundary conditions, inverse problem,
	integral 	overdetermination condition, bounded weak solutions, almost periodic and periodic solutions

\section{Introduction}
\renewcommand{\theequation}{{\thesection}.\arabic{equation}}

\subsection{Problem setting and main result}

\paragraph{Our setting} We consider boundary value problems for a strongly damped wave equation  of the  type
 \begin{equation}\label{1}
	u_{tt} -  a^2 \Delta u - b(t) \Delta u_t	
	+ \varphi(x,u)  = g(t) f_1(x)+ f_2(t,x),\qquad t\in\mathbb{R},\ x\in\Omega, 
\end{equation}
with Dirichlet boundary condition
\begin{equation}\label{2}
u|_{\partial \Omega \times \mathbb R} = 0,
\end{equation}
where $a\ne0$, $b(t)>0$  for all $t\in\R$, and $\Omega \subset \mathbb R^n$ ($n \in \mathbb N$) is a bounded domain with boundary of class $C^2$.  By $\Delta$ we denote the spatial Laplacian.

In spaces of bounded functions (in particular, periodic and almost periodic ones), we aim at identifying conditions that guarantee the existence and uniqueness of both the unknown function $u(t,x)$ and the coefficient $g(t)$. To this end, we supplement the system \reff{1}, \reff{2} with the so-called overdetermination condition
\begin{equation}\label{3}
\int_{\Omega } K(x)u(t,x)\dd x = E(t),\quad t\in\R.
\end{equation}

\paragraph{Notation} We  use  $\|\cdot\|$ to denote the maximum norm in $\R^n$, 
defined by $\|A\|=\max_{j\le n}|A_j|$ for $A\in\R^n$,
and 
 write $\alpha \cdot \beta$ for the scalar product of  $\alpha, \beta\in \R^n$.

Let  $X$ be a Banach space  with norm $\|\cdot\|_X$. For an interval $J \subseteq\mathbb R$,
we denote by $BC(J;X)$  the space of all bounded and continuous functions $u: J \to X$, and  write $BC(J)$ when $X=\R$. 
For $k\ge 1$,  $BC^k(J; X)$  consists of functions in $BC(J;X)$ with bounded derivatives up to order $k$. 

Given a domain $G\subset\R^n$, we  use standard notation for Lebesgue and Sobolev spaces, as well as for
 spaces of continuous functions, such as
$L^\infty(G),$
$L^2(G)$, $W^{p,k}(G)$,
$ H^k(G)=W^{k,2}(G)$,  and
$C^k(G)$.  By $C_c^k(G)$
we denote   the space of compactly supported $C^k(G)$-functions.

As usual, the gradient operator~$\nabla$ applied to an $n$-dimensional scalar function $u$ is the vector-function $\nabla u=(\partial_{x_1}u,\dots,\partial_{x_n}u).$
Write $Q = \mathbb R\times\Omega$, and $Q_{(t_1,t_2)} =  (t_1,t_2)\times\Omega $ for any $t_1,t_2 \in \mathbb R$ with $t_1<t_2.$

The symbols $\overset{}{\rightharpoonup}$ and $\overset{\ast}{\rightharpoonup}$ denote weak and weak-$*$ convergence, respectively.

\paragraph{Main result} Suppose that the coefficients of
 (\ref{1}) and (\ref{3}) satisfy the following conditions.
\begin{itemize}
	\item[{\bf (A1)}]
The function $b(t)$ belongs to $C^1(\R)$
and
satisfies the inequalities $0<\underline{b} \le b(t) \le \overline{b}$ and
$|b^\prime(t)| \le b_1$ for all $t \in \mathbb R$, where
 $ \underline{b}$, $\overline{b}$, and $b_1$ are positive reals.
\item[{\bf (A2)}]
 The function $\varphi(x,\xi)$ is measurable in  $x \in \Omega$  for all $\xi\in\mathbb{R}$ and continuously
 differentiable in $\xi\in \mathbb R$ for almost every $x\in\Omega$. Moreover, there exists a constant $L_0>0$ such
  that,
 for almost every $x \in \Omega$ and all $\xi, \eta \in \mathbb R,$  it holds
 $$\varphi(x,0) = 0 \quad \mbox{and}\quad
 |\varphi(x,\xi) - \varphi (x,\eta)| \le L_0 |\xi - \eta|.$$
\item[{\bf (A3)}]
$f_1\in L^2(\Omega)$ and $f_2\in BC^1(\mathbb R;L^2(\Omega))$.
\item[{\bf (A4)}]
   $E\in BC^2(\mathbb R)$ and $K\in  H^1_0(\Omega)$.
\item[{\bf (A5)}] $  \displaystyle\int_\Omega K(x)f_1(x)\dd x\ne 0$.
\end{itemize}

\begin{defn} \label{sol_weak}
	A pair of functions $(u(t,x),g(t))$  is  called a   {\rm  bounded strong solution} to the problem (\ref{1})--(\ref{3}) in $Q$ if
\beq\label{ug-reg}
\begin{array}{cc}
	u \in BC\left(\mathbb R; H_0^1(\Omega)\cap H^2(\Omega)\right), \quad
u_t \in BC\left(\mathbb R; H_0^1(\Omega)\right) \cap L^\infty\left(\mathbb R; H^2(\Omega)\right),
\\ [2mm]u_{tt}\in L^\infty\left(\R;L^2(\Omega)\right), 
\quad
g\in BC(\mathbb R ),
\end{array}
\ee
and $u$ fulfills the equation (\ref{1}) almost everywhere and the overdetermination condition
\reff{3} pointwise.

\end{defn}

We are prepared to formulate the main result of the paper.
\begin{thm} \label{main}
Let Conditions (A1)--(A5) be satisfied. Then the following statements hold.
\begin{itemize}
	\item[1.]
	If
	$L_0$, $b_1$, and $\|\nabla K\|_{L^2(\Omega)}$ are sufficiently small, then the problem (\ref{1})--(\ref{3}) has a unique  bounded  strong solution.
	\item[2.]
If, additionally, $K\in H^2(\Omega)$, $E\in BC^3(\mathbb R)$
(respectively, $E\in BC^2(\mathbb R)$), and the  coefficients of \reff{1} and \reff{3} are Bohr almost periodic (respectively, $\omega$-periodic) 
in~$t$, then the   bounded strong solution
 is Bohr almost periodic (respectively, $\omega$-periodic) in $t$ also.	
\end{itemize}

\end{thm}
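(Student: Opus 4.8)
The plan is to carry out the four-stage program announced in the abstract, the decisive point being a uniform-in-time energy estimate into which the three smallness hypotheses feed.

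\textbf{Reduction to a direct problem.} First I would eliminate the unknown $g$. Testing \reff{1} against $K$ and integrating over $\Omega$, I use Green's first identity together with $K\in H^1_0(\Omega)\cap H^2(\Omega)$ and \reff{2} to turn the Laplacian terms into $\int_\Omega K\Delta u\dd x=-\int_\Omega\nabla K\cdot\nabla u\dd x$ (and likewise for $u_t$), while differentiating \reff{3} twice gives $\int_\Omega Ku_{tt}\dd x=E''(t)$. Condition (A5) then lets me divide and solve for $g$,
\[
g(t)=\frac{1}{\int_\Omega Kf_1\dd x}\Big(E''(t)+a^2\!\int_\Omega\nabla K\cdot\nabla u\dd x+b(t)\!\int_\Omega\nabla K\cdot\nabla u_t\dd x+\int_\Omega K\varphi(x,u)\dd x-\int_\Omega Kf_2\dd x\Big).
\]
Substituting this back into \reff{1} produces a nonlocal semilinear strongly damped wave problem for $u$ alone, equivalent to \reff{1}--\reff{3}. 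Crucially, every feedback term above carries a factor $\nabla K$, so the size of the nonlocal perturbation is controlled by $\|\nabla K\|_{L^2(\Omega)}$.

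\textbf{Finite interval, then uniform bounds (the main obstacle).} On $[0,T]$ I would solve the reduced initial--boundary value problem by a Faedo--Galerkin scheme in the Dirichlet eigenbasis of $-\Delta$. The structural feature that drives everything is the strong damping $-b(t)\Delta u_t$: testing with $u_t$ yields the dissipation $b(t)\|\nabla u_t\|_{L^2}^2\ge\underline b\,\|\nabla u_t\|_{L^2}^2$, and the associated operator matrix generates an analytic evolution, so the scheme delivers a weak solution with $u\in C([0,T];H^1_0\cap H^2)$, $u_t\in C([0,T];H^1_0)$ and $u_{tt}\in L^\infty$, the nonlinear and nonlocal terms being absorbed via (A2) and Cauchy--Schwarz; uniqueness follows from a Gronwall estimate on the difference of two solutions. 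The hard part is to upgrade these finite-time bounds to estimates uniform on $t\ge0$, and this is exactly where the smallness of $L$, $b_1$, $\|\nabla K\|_{L^2}$ is used. I would work with the modified energy $\mathcal E(t)=\tfrac12\|u_t\|_{L^2}^2+\tfrac{a^2}{2}\|\nabla u\|_{L^2}^2+\delta\langle u_t,u\rangle$, whose cross term makes the damping visible in $u$ as well, and establish a differential inequality $\mathcal E'(t)\le-c\,\mathcal E(t)+C$ for some $c>0$. The point is that the contribution of $b'(t)$ (bounded by $b_1$), the nonlinear term (bounded by $L$), and the nonlocal feedback (bounded by $\|\nabla K\|_{L^2}$) can each be absorbed into $\underline b\,\|\nabla u_t\|_{L^2}^2$ precisely when these constants are small; closing this inequality so that the dissipation strictly dominates is the crux of the whole argument.

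\textbf{Extension to all of $\R$.} The inequality above, applied to the difference of two trajectories, gives exponential decay. For constant $b$ the two characteristic roots of $\mu^2+b\lambda_k\mu+a^2\lambda_k=0$ lie in the open left half-plane for every Dirichlet eigenvalue $\lambda_k>0$, so the frozen-coefficient semigroups are uniformly exponentially stable; with $b'$ small, roughness of exponential stability yields a uniformly exponentially stable evolution family $U(t,s)$ for the non-autonomous linear part. Being dichotomy-free (purely stable), the unique bounded solution on $\R$ is the fixed point of the variation-of-constants map
\[
\binom{u}{u_t}(t)=\int_{-\infty}^{t}U(t,s)\binom{0}{G(s,u,u_t)}\dd s,
\]
where $G$ collects the nonlinearity, the forcing and the nonlocal feedback. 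The same smallness makes this map a contraction on $BC(\R;H^1_0\cap H^2)\times BC(\R;H^1_0)$; the fixed point, together with $g$ recovered from the displayed formula, is the unique bounded weak solution, and analytic smoothing of $U(t,s)$ combined with the regularity in (A3)--(A4) supplies $u_{tt}\in L^\infty(Q)$. This proves Part~1.

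\textbf{Almost periodicity and periodicity.} Part~2 follows from uniqueness plus exponential stability. In the $\omega$-periodic case, $u(\cdot,\cdot+\omega)$ again solves the reduced problem (whose data are now $\omega$-periodic), so by uniqueness it coincides with $u$. In the almost periodic case I would invoke Bochner's criterion: for any sequence $(t_n)$ the translates $b(\cdot+t_n)$, $f_2(\cdot+t_n)$, $E(\cdot+t_n)$ admit a uniformly convergent subsequence, and the Lipschitz dependence of the fixed point on the data (built into the contraction) forces $u(\cdot+t_n)$ to converge uniformly as well; hence the orbit of translates is relatively compact and $u$ is Bohr almost periodic, as is $g$. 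The extra regularity $E\in BC^3$ (and the stated hypothesis on $K$) enters here: it permits differentiating the reduced equation once in $t$, which keeps $g$ (containing $E''$) and $u_t$ in the almost periodic class in $BC(\R)$ and $BC(\R;H^1_0)$ respectively while preserving $u_{tt}\in L^\infty(Q)$. I expect no new difficulty beyond the uniform estimate of the previous step; once exponential stability and continuous dependence are in hand, the transference of almost periodicity from the data to the solution is essentially automatic.
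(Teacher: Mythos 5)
Your proposal has a genuine gap at the very first step, the reduction to a direct problem. After substituting your formula for $g$ back into \reff{1}, testing the reduced equation with $K$ gives only
$$\frac{\dd^2}{\dd t^2}\Bigl(\int_\Omega K(x)u(x,t)\dd x-E(t)\Bigr)=0,$$
so for any solution of the reduced problem the discrepancy $\int_\Omega Ku\dd x-E(t)$ is an affine function $c_1+c_2t$. On a bounded time domain with initial data one can impose compatibility conditions to kill $c_1,c_2$; but your setting is the whole line $\R$ with no initial conditions, and boundedness only forces $c_2=0$ --- the constant $c_1$ survives. Hence the reduced problem is \emph{not} equivalent to \reff{1}--\reff{3}: its unique bounded solution (your fixed point) need not satisfy the overdetermination condition \reff{3}. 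The cleanest way to see the gap is that your reduced equation involves $E$ only through $E''$, so replacing $E$ by $E+c$ leaves the reduced problem (and its unique bounded solution) unchanged while genuinely changing the inverse problem; your construction therefore cannot recover \reff{3} for all admissible $E$. Uniqueness for \reff{1}--\reff{3} does follow from your reduction (every solution of the inverse problem solves the reduced one), but existence does not, and that is the heart of Part 1.

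The paper closes exactly this hole by adding a penalty term: instead of the plain substitution it works with the equation \reff{4}, which contains the extra term $\al K_0(x)\bigl(E(t)-\int_\Omega K(\xi)u(\xi,t)\dd\xi\bigr)$ with an auxiliary parameter $\al>0$. Testing \reff{4} with $K$ then yields $y''=\al y$ for the discrepancy $y(t)=\int_\Omega Ku\dd x-E(t)$, and the only solution of $y''=\al y$ bounded on all of $\R$ is $y\equiv 0$; this restores the equivalence (Lemma \ref{lemma2.2}), at the price of choosing $\al$ small enough that the extra term can be absorbed in the energy estimates. Your remaining steps are otherwise sound in outline: your modified energy with the cross term $\delta\langle u_t,u\rangle$ is essentially the paper's functional $W^N_\eta$, and your route to a bounded solution on $\R$ (exponential stability of the evolution family plus a variation-of-constants contraction, and Bochner's criterion for almost periodicity) is a legitimate alternative to the paper's construction, which instead takes limits of solutions with zero data at $t=-m$ and proves almost periodicity by estimating the energy of $U(\cdot,t+h)-U(\cdot,t)$ for a common $\eps$-almost period $h$. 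But without the penalty-term device (or some substitute), the object you construct solves the wrong problem.
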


The paper is organized as follows.
In Subsection \ref{remarks}, we  comment on the problem and discuss possible generalizations. Subsection \ref{motivation} is devoted to the motivation for this line of research and to a literature review, with particular emphasis on direct and inverse problems in the context of periodic and almost periodic solutions.

The proof of the main result (Theorem \ref{main}) is given in Section \ref{proof}. More precisely, Subsection \ref{equivalent} reformulates the inverse problem as an equivalent direct boundary value problem. Subsection \ref{proof_bounded} establishes bounded strong solvability, while Subsections \ref{ap} and \ref{per} prove the existence and uniqueness of strong almost periodic and periodic solutions, respectively.

Finally, Section \ref{extensions} presents further extensions of our results to inverse problems in bounded and unbounded domains of $Q$, illustrating the generality of the proposed approach.

\subsection{A couple of remarks}\label{remarks}
\paragraph{About the equation \reff{1}}
Our analysis extends to explicitly time-dependent nonlinearities $\vphi(t,x,u)$ and additional weak damping terms involving $u_t$. 
For simplicity, we omit these extensions in the present paper.

Another point concerns the role of $f_2$ in our setting.
This function represents external influences that are known in advance.
It may include noise,  environmental effects, or additional deterministic sources and, hence,  allow the model to accommodate more general data
(covering  forcings that are not exactly separable in $x$ and $t$). Nevertheless, mathematically it courses no loss of generality, as
it does not affect the identification of~$g(t)$.

\paragraph{Exponential stability and smallness assumptions}
The smallness assumptions on the data $L_0$, $b_1$, and $\|\nabla K\|_{L^2(\Omega)}$  in Theorem \ref{main} 
arise from the method used to prove the main result. They ensure exponential stability of solutions to the corresponding initial-boundary value problems, which in turn yields existence, uniqueness, and  (almost) periodicity of bounded strong solutions. We do not address here the necessity of the smallness conditions.

Although the smallness assumptions are not stated explicitly in Theorem \ref{main}, the proof provides constructive sufficient conditions.

	\paragraph{Inverse problems on finite time intervals and on  half-lines} 
In addition to the inverse problem on $\R$ in time, we prove well-posedness in a strong sense on finite time intervals and on half-lines.
 We therefore consider Theorem~\ref{T-obm-neobm} to be of independent interest. It is worth noting that the well-posedness of inverse problems on finite time intervals, in contrast to half-lines  and the whole axis, does not require smallness assumptions on the data.

\subsection{Motivation and state of the art}\label{motivation}

The aim of this paper is to establish existence and uniqueness of periodic and almost periodic solutions for an inverse problem associated with a strongly damped semilinear wave equation. In the model under consideration, both the state $u(t,x)$ and the time-dependent coefficient $g(t)$ in the right-hand side are unknown. Unlike most works on inverse problems for wave-type equations, which are formulated on finite time intervals with prescribed or unknown initial data, we work on an unbounded time domain and study bounded, periodic, and almost periodic solutions, which leads to new analytical challenges.

Strongly damped wave equations arise naturally in viscoelasticity, thermoelasticity, and structural acoustics, describing longitudinal vibrations of a homogeneous bar or transverse oscillations of a string under viscous effects. The damping term $b\Delta u_t$ accounts for stress depending not only on strain but also on the strain rate, as in linearized Kelvin–Voigt materials \cite{BMR,S}.

Inverse problems for such equations are significantly less developed than those for parabolic or weakly damped hyperbolic models. Earlier studies mainly addressed backward or initial value problems and their ill-posedness \cite{tuan17,tuan18}, while more recent works focus on coefficient or source identification on finite time intervals \cite{Li,Protsakh3}. However, the regime of almost periodic or time-periodic behavior  has not yet been considered in the inverse setting.

It is worth emphasizing that relatively few studies are devoted to time-periodic or almost periodic solutions, even in the context of direct problems (see, e.g., \cite{celik,Cuevas,Dvornyk,kokocki,Hongyan}). The study of such regimes is closely related to the long-time dynamics of strongly damped wave equations and the theory of global attractors (see, e.g., \cite{Joelma,Carvalho1,Chen,chueshov,KZ,Massatt2}).

Although strong damping suppresses classical resonance phenomena, the emergence of (almost) periodic solutions indicates a balance between (almost) periodic forcing and the dissipative and other stabilizing mechanisms inherent in the system.
In contrast, the study of periodic and almost periodic solutions for weakly damped wave equations is more subtle, due to the possible occurrence of resonances. To rule out such effects, additional assumptions are typically required. For results on periodic and almost-periodic solutions of weakly damped wave equations, we refer the reader, for example, to \cite{AP,BH,Feireisl,KmRe,Nakao2} and the references therein.

Monographs \cite{BY,lesnic} provide an extensive account of inverse problems with applications in science and engineering, encompassing models based on  damped wave equations.
In applications, materials and structures are often subjected to long-time periodic or quasi-periodic external forces. Identifying system parameters from such repeated excitations becomes essential for diagnostics, monitoring, and control.
Recovering the temporal factor~$g(t)$ is of particular interest, as it
represents the time modulation of a known spatially localized action 
$f_1(x)$ and determines the periodic or almost periodic response of the system. 

Integral-type overdetermination conditions, 
which are needed due to unknown $g$, are natural in applications 
where pointwise-in-time measurements are impractical, whereas cumulative quantities, such as total displacement, energy, stress, or time-averaged boundary data, are more accessible. Mathematically, these conditions 
provide improved stability and robustness. Recent works on inverse problems for strongly or 
weakly damped wave and pseudoparabolic equations have employed such 
conditions to recover unknown sources or coefficients \cite{Aitzhanov,Akhundov,Khonatbek,Mehraliyev,Oussaeif,Protsakh2,Protsakh3,Shahrouzi1,Shahrouzi2}.

\section{Proof of Theorem \ref{main}}\label{proof}

The proof of Theorem \ref{main} is divided into several steps. In the first step, we propose an equivalent formulation 
of the inverse problem \reff{1}--\reff{3} as a direct problem (Subsection~\ref{equivalent}), 
introducing a technical parameter $\al>0$. 
The proof of Claim 1 of Theorem \ref{main}, concerning bounded solvability, is presented in Subsection \ref{proof_bounded} and consists of three parts. First, we prove that an associated initial-boundary value problem admits a unique strong solution on any finite time interval. We then extend these solutions in a bounded way to the positive time half-lines. Finally, we construct a bounded strong solution on the entire real line $\R$ as the limit of solutions defined on the positive  half-lines  and show the uniqueness of the constructed solution.
We emphasize that (and this is a crucial point of the construction),  although the solutions 
 on finite time intervals and on half-lines  depend on the introduced parameter  $\al$, the limit function,
 which solves the original problem on $\R$, is independent of~$\al$.
The proof of Claim 2 of Theorem~\ref{main}, establishing almost periodicity and periodicity, is carried out in 
Subsections~\ref{ap} and \ref{per}.

\subsection{Equivalent formulation  of the problem \reff{1}--\reff{3}}\label{equivalent}
\renewcommand{\theequation}{{\thesection}.\arabic{equation}}
\setcounter{equation}{0}

In this section, we provide an equivalent  formulation of the problem (\ref{1})--(\ref{3}),
  which will be essential in the proof of the main result.

We introduce the following notation, consistent with Condition (A5):
$$
K_0(x) = f_1(x)\left(\int_{\Omega}K(\xi)f_1(\xi)\dd\xi\right)^{-1}.
$$
\begin{lemma}\label{lemma2.2}
    A pair of functions $(u(x,t), g(t))$  is  a  bounded strong solution to the problem  (\ref{1})--(\ref{3}) if and only if
    it satisfies the regularity conditions \reff{ug-reg},
\beq\label{44}
 \begin{array}{rcl}
	    g(t) &=&\displaystyle  \left(\int_{\Omega}K(x)f_1(x)\dd x\right)^{-1}
	\bigg[E''(t) + \int_\Omega \Bigl( a^2\nabla K(x) \cdot  \nabla u \\ [3mm]
	& &                   + b(t)\nabla K(x) \cdot \nabla u_{t}
	+ K(x)\varphi(x,u) - K(x)f_2(t,x)\Bigl)\dd x\bigg]\quad\mbox{for all }\,  t\in \mathbb{R},
\end{array}
\ee
and the  equation
\begin{equation} \label{4}
\displaystyle u_{tt} - a^2\Delta u -  b(t)\Delta  u_t	
	+  \varphi(x,u)  +\al K_0(x)\left(E(t)-\int_{\Omega } K(\xi)u(t,\xi)\dd\xi  \right)=g(t)  f_1(x)+ f_2(t,x) ,  
\end{equation}
holds almost everywhere  in $Q$
for some $\al>0$. 
    \end{lemma}

\begin{proof}{\it Necessity. }
Let $(u, g)$ be a bounded strong solution to the problem
    (\ref{1})--(\ref{3}). After differentiating  (\ref{3}) twice, substituting  $u_{tt}$ from
   (\ref{1}), and performing integration by parts, we obtain the identity
$$
\begin{array}{ll}
\displaystyle g(t)\int_{\Omega}K(x)f_1(x)\dd x\\ [4mm]
\displaystyle \quad+\int_{\Omega}\left[K(x)\left(f_2(t,x)
-\varphi(x,u) \right)     - a^2\nabla K(x) \cdot  \nabla u -
b(t)\nabla K(x) \cdot \nabla u_{t}
\right]\dd x=   \frac{\dd^2}{\dd t^2}E(t).     
\end{array}
  $$      
        Combining it with  Condition
        (A5) yields the formula~\reff{44} for~$g(t)$.
        
        The equation \reff{4} (for any real $\al$) follows directly from the equation \reff{1} and the integral condition~\reff{3}.

 {\it Sufficiency.} Let $(u^*,g^*)$ be a pair of functions satisfying the regularity conditions \reff{ug-reg}
for $u=u^*$, $g=g^*$,   and for some constant $\al>0$.
  Set
  $$
  E^*(t)=\int_{\Omega}K(x)u^*(t,x)\dd x
  $$
  and show that $E^*(t)\equiv E(t)$ for all $t\in\R$.  To this end, we differentiate 
$E^*(t)$ twice and, using \reff{4}, derive the equality
   \begin{eqnarray*}
  & &    \frac{\dd^2}{\dd t^2}{E^*}(t) = \int_{\Omega}K(x)u_{tt}^*\dd x=  \int_\Omega \Big[g^*(t)f_1(x)+f_2(t,x)\Big]K(x)\dd x\\
 && \quad  -\int_\Omega \Bigl[ a^2\nabla K(x) \cdot  \nabla u^*
    +b(t)\nabla K(x) \cdot \nabla u_{t}^*
            +K(x)\varphi(x,u^*)\Bigl]\dd x \\
         & & \quad   -\al \left(E(t)-\int_{\Omega } K(\xi)u^*(t,\xi)\dd\xi \right)   
                 =   
              \frac{\dd^2}{\dd t^2}E(t) \\ [3mm]
              & &\quad    + \int_\Omega \Bigl[a^2\nabla K(x) \cdot  \nabla u^*                + b(t)\nabla K(x) \cdot \nabla u_{t}^*
             + K(x)\varphi(x,u^*) - K(x)f_2(t,x)\Bigl]\dd x\\
             &&\quad+\int_\Omega  f_2(t,x)K(x)\dd x
                -\int_\Omega \Bigl[ a^2\nabla K(x) \cdot  \nabla u^*
        +b(t)\nabla K(x) \cdot \nabla u_{t}^*
        +K(x)\varphi(x,u^*)\Bigl]\dd x\\
        &&\quad-\al \left(E(t)-\int_{\Omega } K(\xi)u^*(t,\xi)\dd\xi \right)=  \frac{\dd^2}{\dd t^2} E(t) +\al
        \left(E^*(t) - E(t)\right),
   \end{eqnarray*}
which imply  the identity
     \beq\label{E-E}   
     \frac{\dd^2}{\dd t^2} (E^*(t) - E(t)) = \al(E^*(t) - E(t)) \qquad \mbox{for all }   t\in \mathbb R.
 \ee
   As  $E^*(t)$ and $E(t)$ are bounded on $\R$ and $\al> 0$, it follows that
	$E^*(t)=E(t)$ for all $  t \in \mathbb R.$  Therefore,
	$u^*$ satisfies the overdetermination condition (\ref{3}) and, hence the formulation  of the original problem,  as desired. 
\qed	\end{proof}

\begin{rem}\label{all-al}\rm
The parameter~$\al>0$ is introduced only for technical purposes. As we have seen, it plays a crucial role in the equivalent reformulation of the original problem provided in Lemma~\ref{lemma2.2}.
Moreover, as follows from the sufficiency part of the proof of Lemma~\ref{lemma2.2}, although equation~\reff{4} depends on the parameter $\alpha$, the bounded strong solution to the problem \reff{44}, \reff{4}, \reff{2}, and hence to the original problem \reff{1}--\reff{3}, is independent of~$\alpha$. In the proof of the bounded strong solvability in Section \ref{proof_bounded},  the parameter 
$\al$ will be assumed sufficiently small.
\end{rem}

\subsection{Bounded  strong solutions}\label{proof_bounded}

\subsubsection{Auxiliary problem  on  finite time intervals}\label{interval}
By combining (\ref{44}) and (\ref{4}), we obtain the equation
\begin{equation}
	\displaystyle u_{tt} -  a^2\Delta u  - b(t) \Delta u_t 	
	+  \varphi(x,u) = \left[\Phi_\al u\right](t,x) + F_\al(t,x),   \label{G0}
\end{equation}
where
\beq \label{FPhi}  
\begin{array} {rcl}
	\displaystyle  
	F_{\al}(t,x)&=&\displaystyle f_2(t,x)+K_0(x)\left(E''(t)-\al E(t)-\int_\Omega K(\xi)f_2(t,\xi)\dd \xi\right),\\
	\left[\Phi_\al u\right](t,x)&=&\displaystyle K_0(x)\int_\Omega \Bigl[a^2\nabla K(\xi) \cdot  \nabla u+ b(t)\nabla K(\xi) \cdot \nabla u_{t}\\ [3mm]
	&&\qquad\qquad\ \ + K(\xi)\varphi(\xi,u)+\al K(\xi)u\Bigr]\dd \xi.
\end{array}
\ee

In this section, we investigate, on any finite time interval, the existence and uniqueness of a   strong solution to the
problem \reff{G0},  \reff{2}, with the initial conditions
\begin{equation}\label{2p}
	u(s,x)=u_0(x),\quad u_t(s,x)=u_1(x) \qquad \mbox{for all } x\in\Omega
\end{equation}
at time $t=s$, for an arbitrary fixed initial time $s\in\R$.

\begin{defn}
\label{def-obm}
Let  $\al\ge 0$ and $s, T\in\R$ be arbitrary real numbers  satisfying  $s< T$.  A  function $u$  is  called a  {\rm strong solution}
to the initial-boundary value problem  (\ref{G0}), \reff{2},  \reff{2p} in $Q_{(s,T)}$
 if 
$$
\begin{array}{cc}
u \in C\left([s,T]; H_0^1(\Omega)\cap H^2(\Omega)\right),\quad 
u_t \in C\left([s,T]; H_0^1(\Omega)\right)\cap L^\infty\left(s,T;H^2(\Omega)\right), \\ 
[2mm] u_{tt}\in L^\infty\left(s,T;L^2(\Omega)\right),
\end{array}
$$
and $u$ satisfies both the equation (\ref{G0}) and the initial conditions  (\ref{2p})  almost everywhere.
\end{defn}

In order to prove the main result of this subsection, we also introduce the notion of a  weak solution.

\begin{defn}
\label{def-obm2}Let $\al\ge 0$ and $s, T\in\R$ be arbitrary real numbers  satisfying  $s< T$. 	 A  function $u$    is  called a  {\rm  weak solution}
to the initial-boundary value problem  (\ref{G0}), \reff{2},  \reff{2p} in $Q_{(s,T)}$
 if 
$$u \in C\left([s,T]; H_0^1(\Omega)\right), \quad u_t \in C\left([s,T]; H_0^1(\Omega)\right), \quad 
u_{tt}\in L^\infty\left(s,T;L^2(\Omega)\right),
$$
and the following equality holds for  all   $v\in L^2\left(s,T ;H_0^1(\Omega)\right)$:
\begin{equation}
 \displaystyle\int\limits_{Q_{(s,T)}} \Bigl(u_{tt}v +  a^2\nabla u \cdot \nabla v + b(t) \nabla u_t \cdot \nabla v	
 +  \varphi(x,u)v -\left[\Phi_\al u\right](t,x)v- F_\al(t,x)v\Bigl)\dd x\dd t
   = 0.\label{G1}\end{equation}
   \end{defn}

In what follows, $\gamma>0$ will denote the constant in Friedrich's inequality
\begin{equation}
	\|u\|_{L^2(\Omega)} \le \gamma  \|\nabla u\|_{L^2(\Omega)}, 
	\label{riv16}
\end{equation}
which holds uniformly for all $u\in H_0^{1}(\Omega)$.

\begin{thm}
\label{T-obm}
Let $s, T\in\R$ be arbitrary real numbers  satisfying  $s< T$. Moreover,  let 	$u_0\in H_0^1(\Omega)\cap H^2(\Omega)$,
$u_1 \in H_0^1(\Omega)\cap H^2(\Omega)$, and   Conditions {(A1)}--{(A5)} be satisfied. Then there exists $\al_1>0$ such that, for all  $\al\in[0,\alpha_1] $,
 the problem  (\ref{G0}), \reff{2},  \reff{2p} in $Q_{(s,T)}$
admits a unique  strong solution.
\end{thm}
\begin{proof}
	The proof is  divided into a number of steps. First, we prove the existence and uniqueness of a weak solution to
	 the problem. Then, after establishing higher regularity, we obtain a strong solution.

	{\it 1. Galerkin approximations and energy estimates.} 
Let $(w^k)$ be a basis of $H_0^1(\Omega) \cap H^2(\Omega)$ that is
orthonormal in $L^2(\Omega).$ Then, for every $N\ge 1$, there exist sequences $(u^N_{0,k})_{k=1}^\infty$ and $(u^N_{1,k})_{k=1}^\infty$
such that the sequences $(u_0^N)_{N=1}^\infty$ and $(u_1^N)_{N=1}^\infty$ defined by
$$
u^N_0(x) = \sum\limits_{k=1}^N u^N_{0,k}w^k(x) \quad \mbox{and}\quad u^N_1(x) = \sum\limits_{k=1}^N u^N_{1,k}w^k(x),
$$
 approximate the initial functions $u_0$ and $u_1$ in the sense of the following convergences:
\begin{equation}\label{zb}
	\lim_{N \to \infty}\|u_0 - u_0^N\|_{H^2(\Omega)} = 0, \quad
	\lim_{N \to \infty}\|u_1 - u_1^N\|_{H^2(\Omega)} = 0.
\end{equation}

Write
\beq\label{uN}u^N(t,x) = \sum_{k=1}^N c_k^N(t) w^k(x),
\ee
where, for every $N\ge 1$, the vector of coefficients $(c_1^N(t),\dots, c_N^N(t))$ is a solution to the following  linear $N\times N$-system of ordinary differential equations:
\begin{equation} \label{G2}
\begin{array}{cc}
 \displaystyle	 \int_\Omega \Bigl(u_{tt}^N w^k + a^2\nabla u^N \cdot\nabla w^k +
b(t) \nabla u^N_t \cdot\nabla w^k	+ \varphi(x,u^N) w^k \Bigl) \dd x \\
	= \displaystyle  \int_\Omega \left[\Phi_\al u^N\right](t,x)w^k\dd x+  \int_\Omega  F_\al(t,x) w^k \dd x, \qquad k\le N,
\end{array}
\end{equation}
subjected to the initial conditions
\begin{equation} \label{G2-in}
	\begin{array}{cc}
		\displaystyle \qquad c_k^N(s) = u^N_{0,k}, \quad c_{kt}^N(s) = u^N_{1,k}, \qquad k \le N.
	\end{array}
\end{equation}
By applying Carath\'{e}odory's Theorem \cite[p. 43]{Coddington} to (\ref{G2}), we state that there exists $T_0\le T$
such that the problem (\ref{G2})--(\ref{G2-in}) has a unique absolutely continuous solution $(c_1^N(t),\dots, c_N^N(t))$ on the interval $[s,T_0]$. Hence,
the function $u^N$, given by \reff{uN}, satisfies both the equations (\ref{G2}) for all $k\le N$ and $t\in [s,T_0]$, together with the initial conditions
$$
u^N(s,x) = u^N_{0}(x), \quad u_t^N(s,x) = u^N_{1}(x), \qquad k \le N.
$$

We now show that $u^N$ extends to $[s,T]$ and satisfies  (\ref{G2}) on this interval. To this end, we combine the local existence result on $[s,T_0]$  with the global a priori estimates for~$u^N$, which will be established below. To derive the a priori estimates, we multiply the $k$-th equation of (\ref{G2}) by $(c_{k}^N(t))'$, 
sum  over $k=1,\dots, N$, and integrate  over  $[s,\tau]$, where $ \tau\le T.$  This yields 
\beq\label{G3}
\begin{array}{cc}
 \displaystyle  \int_{Q_{(s,\tau)}}\Bigl(u_{tt}^N u_t^N + a^2\nabla u^N \cdot\nabla u_t^N +
b(t) \nabla u^N_t \cdot\nabla u_t^N
 \Bigl)\dd x\dd t\\	[4mm]
\displaystyle	=  \int_{Q_{(s,\tau)}}\Bigl(F_\al(t,x)- \varphi(x,u^N)+\left[\Phi_\al u^N\right](t,x)\Bigr)u^N_t\dd x \dd t,
\end{array}
\ee
and an integration by parts leads to
\beq\label{G4pr}
\begin{array}{cc} 
 \displaystyle 	 \frac{1}{2}\int_\Omega |u_{t}^N(\tau,x)|^2\dd x+
\frac{a^2}{2} \int_\Omega \|\nabla u^N(\tau,x)\|^2 \dd x
 + \int_{Q_{(s,\tau)}}  b(t)\|\nabla u^N_{t}\|^2 \dd x \dd t \\ [3mm]
 \displaystyle 
 	 = \frac{1}{2}\int_\Omega |u_{1}^N(x)|^2\dd x  +
  \frac{a^2}{2}\int_\Omega \|\nabla u^N_{0}(x) \|^2\dd x \\ [3mm]
 \displaystyle  + \int_{Q_{(s,\tau)}}\Bigl(F_\al(t,x)- \varphi(x,u^N)+\left[\Phi_\al u^N\right](t,x)\Bigr)u^N_t\dd x \dd t. 
\end{array}
\ee
The following estimate follows from  (\ref{FPhi}) and (\ref{riv16}):
\beq\label{phi} 
\begin{array} {rcl}
  \displaystyle\int_{Q_{(s,\tau)}}|\varphi(x,u^N)u^N_{t}|\dd x \dd t 
& \le&  \displaystyle\frac {L_0}{2}\int_s^{\tau}\left( \gamma^2\|\nabla u^N\|^2_{L^2(\Omega)} +
 \|u^N_{t}\|^2_{L^2(\Omega)}\right)\dd t, \\ [5mm]
 \left\|\left[\Phi_\al u^N\right](t,\cdot)\right\|_{L^2(\Omega)} &\le& \displaystyle
 \eps_1(\al) \left(\|\nabla u^N(t,\cdot)\|_{L^2(\Omega)}+\|\nabla u_t^N(t,\cdot)\|_{L^2(\Omega)}
\right), 
\end{array}
\ee
where
$$
\eps_1(\al)=\|K_0\|_{L^2(\Omega)}\max\Bigl\{a^2\|\nabla K\|_{L^2(\Omega)} +
\ga(\al+L_0 ) \|K\|_{L^2(\Omega)},\overline b\|\nabla K\|_{L^2(\Omega)}\Bigr\}.
$$
Combining \reff{G4pr} with \reff{phi} and with
 $$\displaystyle	 \int_{Q_{(s,\tau)}}\left|\left[\Phi_\al u^N\right](t,x)u^N_{t}\right|\dd x\dd t
 \le \frac{1}{2\eps_0}\int_s^\tau \left\|\left[\Phi_\al u^N\right](t,\cdot)\right\|^2_{L^2(\Omega)}\dd t+
\frac{\eps_0}{2}\int_s^\tau \left\|u^N_{t}(t,\cdot)\right\|^2_{L^2(\Omega)}\dd t, $$ 
being true for any $\eps_0>0$, we obtain
\beq
\begin{array}{ll} \label{G4}
  \displaystyle 	 \|u_{t}^N(\tau,\cdot)\|^2_{L^2(\Omega)} +
a^2 \|\nabla u^N(\tau,\cdot)\|^2_{L^2(\Omega)} +
 2\biggl(\underline b -\frac {\eps_1(\al)^2}{\eps_0}\biggr)\int_{s}^\tau  \|\nabla u^N_t(t,\cdot)\|^2_{L^2(\Omega)} \dd t \\ [5mm]
  \displaystyle  \displaystyle \quad\quad\	 \le  \|u_{1}^N \|^2_{L^2(\Omega)} +
a^2  \|\nabla u^N_0\|^2_{L^2(\Omega)} + 
 \int_s^\tau \| F_\al(t,\cdot)\|_{L^2(\Omega)}^2 \dd t\\ [5mm]
\  \displaystyle  
\quad\qquad\,+
\biggl(\frac {2\eps_1(\al)^2}{\eps_0}+L_0\ga^2\biggr)\int_s^\tau  \|\nabla u^N(t,\cdot)\|^2_{L^2(\Omega)}\dd t
 +(1+\eps_0+L_0) \int_s^\tau\|u^N_t(t,\cdot)\|^2_{L^2(\Omega)}\dd t.
\end{array}
\ee
Fix $\eps_0>0$ 
to ensure the inequality 
$ 2\eps_1(0)^2 /\eps_0<\underline b.
$
 Then there exists  a sufficiently small~$\al_1>0$, such that for all  $\al\in[0,\al_1]$ it holds
\beq\label{eps0}
\frac {2\eps_1(\al)^2}{\eps_0}<\underline b.
\ee
Moreover, in view of the convergences  (\ref{zb}),  there exists  a constant~$A_0$, independent of $N\in\N$, $s\in\R$,  $\tau\in[s,T]$, and $\al\in[0,\al_1]$, such that
$$  \| u^N_1\|^2_{L^2(\Omega)} +
a^2 \|\nabla u^N_0\|^2_{L^2(\Omega)}+\int_s^\tau \| F_\al(t,\cdot)\|_{L^2(\Omega)}^2 \dd t\le A_0.
$$
The inequality (\ref{G4}) then yields
\beq
\begin{array}{ll}\label{G44}
 \displaystyle
\|u_{t}^N(\tau,\cdot)\|^2_{L^2(\Omega)} +
 \|\nabla u^N(\tau,\cdot)\|^2_{L^2(\Omega)}+\int_{s}^\tau  \|\nabla u^N_t(t,\cdot)\|^2_{L^2(\Omega)} \dd t
 \\ [2mm]
 \displaystyle \quad	 \le A_1+
  A_2\int_s^{\tau}\left( \|u_{t}^N(t,\cdot)\|^2_{L^2(\Omega)} +
 \|\nabla u^N(t,\cdot)\|^2_{L^2(\Omega)} +\int_s^{t}\|\nabla u^N_t(\theta,\cdot)\|^2_{L^2(\Omega)} \dd \theta\right) \dd  t,
\end{array}
\ee
where
$$
\begin{array}{rcl}
	  A_1&=& A_0\left( \min\left\{1,a^2,\underline b \right\}\right)^{-1}
\end{array}
$$  
and the constant $A_2>0$ is fixed to satisfy the following inequality for all  $\al\in[0,\al_1]$:
$$
\begin{array}{rcl}A_2&\ge&\displaystyle\max\biggl\{\frac {2\eps_1(\al)^2}{\eps_0}+L_0\ga^2,1+\eps_0+L_0\biggr\}\left( \min\left\{1,a^2,\underline b \right\}\right)^{-1}.
\end{array}
$$
Note that both constants $A_1$ and  $A_2$ do not depend on $N$, $s$, and $T$.
By Gr\"onwall's inequality applied to (\ref{G44}),  we obtain the bound
$$
\|u_{t}^N(\tau,\cdot)\|^2_{L^2(\Omega)} +
\|\nabla u^N(\tau,\cdot)\|^2_{L^2(\Omega)} +\int_{s}^\tau  \|\nabla u^N_t(t,\cdot)\|^2_{L^2(\Omega)} \dd t \le A_1 e^{A_2 (T-s)}.
$$
Combined with Friedrich's inequality, this yields another bound, namely,
\beq\label{prod}
\begin{array}{rr}
\|u^N(\tau,\cdot)\|^2_{L^2(\Omega)} +\|u_{t}^N(\tau,\cdot)\|^2_{L^2(\Omega)}+\|\nabla u^N(\tau,\cdot)\|^2_{L^2(\Omega)} \\ [3mm]
 \displaystyle 
+\int_{s}^\tau  \|\nabla u^N_t(t,\cdot)\|^2_{L^2(\Omega)} \dd t \le A_1(1+\ga^2)e^{A_2 (T-s)}.
\end{array}
\ee

Next, we derive an upper bound
for $u^N_{tt}.$ To this end, we differentiate the $k$-th  equation in (\ref{G2}) with respect to  $t$,   multiply  by $(c_{k}^N(t))^{\prime\prime}$,   sum  over $k=1,\dots,N$, and integrate   
 over~$[s,\tau]$. Using \reff{uN}, we obtain
\beq
\begin{array}{cc} \label{G3tt}
 \displaystyle \int_{Q_{(s,\tau)}}\Bigl(u_{ttt}^N u_{tt}^N + a^2\nabla u^N_t \cdot \nabla u^N_{tt} +
b^\prime(t)\nabla u^N_t \cdot \nabla u^N_{tt} +
b(t)\nabla u^N_{tt} \cdot \nabla u^N_{tt}  \\ + \partial_2 \varphi(x,u^N) u^N_t   u^N_{tt}
 \Bigl) \dd x \dd t
 \displaystyle
 = \int_{Q_{(s,\tau)}}  \Bigl(\d_tF_\al(t,x)+\d_t\left[\Phi_\al u^N\right](t,x)\Bigr)u^N_{tt}  \dd x \dd t,
\end{array}
\ee
where
$$
\begin{array}{ll}
\displaystyle\d_t\left[\Phi_\al u^N\right](t,x) = \al K_0(x)\int_{\Omega}
K(\xi)u_t^N\dd\xi\\ \qquad +
\displaystyle K_0(x)  \int_{\Omega} \Bigl( (a^2 + b^\prime(t))\nabla K(\xi) \cdot \nabla u^N_{t}+ b(t)\nabla K(\xi) \cdot \nabla u^N_{tt} + K(\xi) \partial_2 \varphi(x,u^N) u^N_t \Bigl) \dd \xi.
\end{array}
$$
Here and in what follows, $\partial_j$ denotes the partial derivative with respect to the $j$-th argument.
The following estimate is straightforward:
\beq\label{der}
\left\| \d_t\left[\Phi_\al u^N\right](t,\cdot)\right\|_{L^2(\Omega)} \le 
\eps_2(\al)\left(\|\nabla u_t^N(t,\cdot)\|_{L^2(\Omega)} +
\| \nabla u^N_{tt}(t,\cdot)\|_{L^2(\Omega)}\right),
\ee
where 
$$
\eps_2(\al)=\|K_0\|_{L^2(\Omega)}\max\Bigl\{\overline b \|\nabla K\|_{L^2(\Omega)},
(a^2+b_1)\|\nabla K\|_{L^2(\Omega)}+L_0\ga\|K\|_{L^2(\Omega)}
+\al\ga\|K\|_{L^2(\Omega)}\Bigr\}.
$$
Moreover, on the account of \reff{der},  the following estimate:
\beq\label{G33}
\begin{array}{lll}
	\displaystyle	 \int_{Q_{(s,\tau)}}\left|\d_t\left[\Phi_\al u^N\right](t,x)u^N_{tt}\right|\dd x\dd t
\\[3mm]
\displaystyle\qquad	\le \frac{1}{2\eps_3}\int_s^\tau \left\|u^N_{tt}(t,\cdot)\right\|^2_{L^2(\Omega)}\dd t +\frac{\eps_3}{2}\int_s^\tau \left\|\d_t\left[\Phi_\al u^N\right](t,\cdot)\right\|^2_{L^2(\Omega)}\dd t \\[3mm]
\displaystyle\qquad\le \frac {1}{2\eps_3}\int_s^{\tau} \|u^N_{tt}(t,\cdot)\|^2_{L^2(\Omega)}\dd t 
	\displaystyle +\eps_2(\al)^2\eps_3\int_s^\tau 
	\left(\|\nabla u^N_t(t,\cdot)\|^2_{L^2(\Omega)}+\|\nabla u_{tt}^N(t,\cdot)\|^2_{L^2(\Omega)}
	\right)\dd t
\end{array}
\ee
holds for every $\eps_3>0$.

After integration by parts in \reff{G3tt}, we obtain
\beq\begin{array}{cc} \label{G3ttt}
\displaystyle \int_{\Omega}|u_{tt}^N(\tau,x)|^2\dd x
 + a^2 \int_{\Omega}\|\nabla u^N_t(\tau,x)\|^2 \dd x + 2 \int_{Q_{(s,\tau)}}\bigl(b(t)\|\nabla u^N_{tt}\|^2
 \\ [5mm]
 \displaystyle + b^\prime(t)\nabla u^N_t \cdot \nabla u^N_{tt} +\partial_2 \varphi(x,u^N) u^N_t  u^N_{tt}
 \bigl) \dd x \dd t  = \int_{\Omega}|u_{tt}^N(s,x)|^2\dd x\\ [3mm]
 \displaystyle 
 + a^2 \int_{\Omega}\|\nabla u^N_1(x)\|^2 \dd x + 2 \int_{Q_{(s,\tau)}}  \Bigl(\d_tF_\al(t,x)+\d_t\left[\Phi_\al u^N\right](t,x)\Bigr)u^N_{tt}  \dd x \dd t.
\end{array}
\ee
Combining \reff{G3ttt} with  \reff{G33} and with
$$
2 \int_{Q_{(s,\tau)}} b^\prime(t)\nabla u^N_t \cdot \nabla u^N_{tt}  \dd x \dd t\le
\eps_4\int_s^\tau\| \nabla u_{tt}^N(t,\cdot)\|^2_{L^2(\Omega)} \dd t+\frac{b_1^2}{\eps_4}  \int_s^{\tau}  \|\nabla u^N_{t}(t,\cdot)\|^2_{L^2(\Omega)}\dd t
$$
gives
\beq\label{G3ttt2}
\begin{array}{ll}
\displaystyle \| u_{tt}^N(\tau,\cdot)\|^2_{L^2(\Omega)}
+ a^2 \|\nabla u^N_t(\tau,\cdot)\|^2_{L^2(\Omega)} + \left(2\underline b-\eps_4-2\eps_2(\al)^2\eps_3\right) \int_s^\tau\| \nabla u_{tt}^N(t,\cdot)\|^2_{L^2(\Omega)} \dd t \\ [4mm]
\qquad \qquad \le\displaystyle
 \| u_{tt}^N(s,\cdot)\|^2_{L^2(\Omega)}
+ a^2 \|\nabla u^N_1 \|^2_{L^2(\Omega)}+\int_s^{\tau}   \| \partial_t F_\al(t,\cdot)\|_{L^2(\Omega)}^2\dd t\\ [3mm]
\ \displaystyle\qquad \quad \qquad +\biggl(1+L_0+\frac{1}{\eps_3}\biggr)   \int_s^{\tau}    \|u^N_{tt}(t,\cdot)\|^2_{L^2(\Omega)} \dd t \\ [4mm]
\qquad \qquad \quad\ \displaystyle+ \bigg(2 \eps_2(\al)^2\eps_3+L_0\gamma^2+\frac{b_1^2}{\eps_4}\bigg)  \int_s^{\tau}  \|\nabla u^N_{t}(t,\cdot)\|^2_{L^2(\Omega)}\dd t,
\end{array}
\ee
where $\eps_4$ is a positive real number. Suppose that $\eps_3$ and $\eps_4$ are chosen so that 
$\underline b>\eps_4+ 2\eps_2(0)^2\eps_3.$
Then the value of $\al_1$ can be chosen smaller, if necessary, so that for all $\al\in[0,\al_1]$, in addition to \reff{eps0}, the following inequality holds:
\beq\label{eps34}
\underline b>\eps_4+2\eps_2(\al)^2\eps_3.
\ee

To obtain the desired estimate for $u^N_{tt}$ from \reff{G3ttt}, it remains to estimate
 $\| u_{tt}^N(s,\cdot)\|^2_{L^2(\Omega)}$. To this end, we 
 multiply the $k$-th differential equation in \reff{G2} by $(c_{k}^N(t))^{\prime\prime}$,  sum  with respect to $k=1,\dots,N$, and 
evaluate the resulting equality at $t=s$. Consequently, we obtain
\begin{equation} \label{G20}
\begin{array}{lll}
 \displaystyle	 \int_\Omega \Bigl(|u_{tt}^N(s,x)|^2
 + a^2\nabla u^N_0(x)\cdot \nabla u_{tt}^N(s,x) +
b(s)\nabla u^N_1(x)\cdot \nabla u_{tt}^N(s,x)	
	 \\  \qquad\displaystyle	 + \varphi(x,u^N_0(x)) u_{tt}^N(s,x)
 \Bigl)\dd x 	= \int_\Omega \Bigl(F_\al(s,x)+\left[\Phi_\al u^N\right](s,x)\Bigr)u_{tt}^N(s,x) \dd x.
\end{array}
\end{equation}
Notice the following simple inequalities:
$$
\begin{array}{ll}
 \displaystyle\ \	 \left|\int_\Omega a^2\nabla u^N_0(x)\cdot \nabla u_{tt}^N(s,x)\dd x\right|  
 \ \le \ \displaystyle	\frac{a^4}{ 4\eps_5}\|\Delta u^N_0\|_{L^2(\Omega)}^2
+\eps_5\| u^N_{tt}(s,\cdot)\|_{L^2(\Omega)}^2,	\\ [4mm]
 \displaystyle \left|\int_\Omega b(s)\nabla u^N_1(x) \cdot\nabla u_{tt}^N(s,x)\dd x \right|
 \ \le\ \displaystyle\frac{\overline b^2}{4\eps_5}\|\Delta u^N_1\|_{L^2(\Omega)}^2
+\eps_5\| u^N_{tt}(s,\cdot)\|_{L^2(\Omega)}^2,	\\ [4mm]
\displaystyle
\quad\ \ \left|\int_\Omega
 \varphi(x,u^N_0(x)) u_{tt}^N(s,x)
\dd x\right|\ \le \ \frac{L_0^2}{4\eps_5}\|u_0^N\|_{L^2(\Omega)}^2+\eps_5\| u^N_{tt}(s,\cdot)\|_{L^2(\Omega)}^2,\\ [4mm]
\displaystyle
\left|
\int_\Omega \left(F_\al(s,x)+\left[\Phi_\al u^N\right](s,x)\right)u_{tt}^N(s,x) \dd x
\right| \\[4mm]
\displaystyle \qquad\qquad \le\ \frac{1}{4\eps_5}\int_\Omega \left(F_\al(s,x)+\left[\Phi_\al u^N\right](s,x)\right)^2\dd x
+\eps_5\| u^N_{tt}(s,\cdot)\|_{L^2(\Omega)}^2,
\end{array}
$$
being true for any $\eps_5>0$. Combining these with \reff{G20} yields
\begin{eqnarray*}
(1-4\eps_5) \|u_{tt}^N(s,\cdot)\|_{L^2(\Omega)}^2& \le&  \frac{1}{4\eps_5}\Bigl( a^4\|\Delta u^N_0\|_{L^2(\Omega)}^2  +  \overline b^2\|\Delta u^N_1\|_{L^2(\Omega)}^2+
L_0^2 \|u^N_0\|_{L^2(\Omega)}^2 \\
 & &  +
\|F_\al(s,\cdot)+\left[\Phi_\al u^N\right](s,\cdot)\|_{L^2(\Omega)}^2\Bigl), 
\end{eqnarray*}
where
\begin{eqnarray*}
& & \left[\Phi_\al u^N\right](s,x)=\displaystyle K_0(x)\int_\Omega \Bigl[a^2\nabla K(\xi) \cdot  \nabla u_0^N \\
& & \qquad\qquad+ b(t)\nabla K(\xi) \cdot \nabla u_{1}^N
+ K(\xi)\varphi(\xi,u_0^N)+\al K(\xi)u_0^N\Bigr]\dd \xi.
\end{eqnarray*}
Fix $\eps_5 < 1/4$. Then there exists a positive constant $A_3 $, independent of $s,\tau$,  $N$,
and $\al\in[0,\al_1]$, 
such that
\beq\label{ocut}
\|u_{tt}^N(s,\cdot)\|_{L^2(\Omega)} \le A_3.
\ee
Combining  (\ref{G3ttt2}) with \reff{eps34} and (\ref{ocut}) yields the bound
\beq\label{G3ttttt2}
\begin{array} {ll}
\displaystyle\| u_{tt}^N(\tau,\cdot)\|^2_{L^2(\Omega)}
+ a^2 \|\nabla u^N_t(\tau,\cdot)\|^2_{L^2(\Omega)} + \underline b \int_s^\tau\| \nabla u_{tt}^N(t,\cdot)\|^2_{L^2(\Omega)} \dd t \\\displaystyle \qquad\le
 \left( A_3+a^2 \|\nabla u^N_1 \|^2_{L^2(\Omega)}\!
\right.
\!\left.+\! \int_s^{\tau}  \| \partial_t F_0(t,\cdot)\|^2_{L^2(\Omega)}
 \dd t\right)\\ \displaystyle\qquad\quad\ \, + A_4\int_s^{\tau} \left(  \|u^N_{tt}(t,\cdot)\|^2_{L^2(\Omega)} + \|\nabla u^N_{t}(t,\cdot)\|^2_{L^2(\Omega)}
+\int_s^{t}\|\nabla u^N_{tt}(\theta,\cdot)\|^2_{L^2(\Omega)} \dd \theta \right) \dd t,
\end{array}
\ee
where the constant $A_4$ is fixed to satisfy the inequality
$$A_4 \ge \max\left\{1+L+\frac{1}{\eps_3},2\eps_2(\al)^2\eps_3+L_0\gamma^2+\frac{b_1^2}{\eps_4}\right\}
$$
for all $\al\in[0,\al_1]$.
  Now,  Gr\"onwall's argument applied to \reff{G3ttttt2} yields that 
there
exists a positive constant $A_5$, independent of $s\in\R$,$t \in [s,T]$, $N$, and
$\al\in[0,\al_1]$, such that
\begin{eqnarray}\label{utt}
\|u_{tt}^N(t,\cdot)\|_{L^2(\Omega)} +
\|\nabla u_{t}^N(t,\cdot)\|_{L^2(\Omega)}+\int_s^t\|\nabla u_{tt}^N(\tau,\cdot)\|_{L^2(\Omega)} \dd \tau
\le A_5.
\end{eqnarray}

{\it 2. Limit transition step and weak solutions.}
Since the estimates (\ref{prod})  and (\ref{utt}) are uniform in $N$, there exists a subsequence of $(u^N)$
(for simplicity, denoted  again by $(u^N)$) such that
the following convergences hold as $N\to\infty$:
\begin{equation} \label{G6z}
	\begin{array}{ll}
		u^N \overset{\ast}{\rightharpoonup} u   \ {\rm \ in} \ L^\infty(s,T; H^1_0(\Omega)), &\quad 
		u^N \overset{}{\rightharpoonup} u \ {\rm  \ in} \ L^2(s,T; H^1_0(\Omega)), \\[2mm]
		u^N_t \overset{\ast}{\rightharpoonup} u_t \ {\rm  \ in} \  L^\infty(s,T; H^1_0(\Omega)),&
		\quad
		u^N_t \overset{}{\rightharpoonup} u_t \ {\rm  \ in} \  L^2(s,T; H^1_0(\Omega)),\\[2mm]
		u^N_{tt} \overset{\ast}{\rightharpoonup} u_{tt} \ {\rm  \ in} \ L^\infty(s,T; L^2(\Omega)), &\quad
		u^N_{tt} \overset{}{\rightharpoonup} u_{tt} \ {\rm  \ in} \  L^2(s,T; H^1_0(\Omega)).
	\end{array}
\end{equation}
Then, due to  \cite[Lemma 1.2, p.7]{Lions},  the functions  $u$ and $u_{t}$
belong to $C([s,T]; H^1_0(\Omega)).$

Since the sequence $(u^N)$ is bounded in $L^2(s,T; H^1_0(\Omega))$ and  the sequence $(u^N_t)$ is 
bounded in $L^2(s,T; L^2(\Omega)),$  the sequence $(u^N)$ is bounded in
 $H^1(Q_{(s,T)})$. 
Moreover, the embedding $H^1(Q_{(s,T)})\hookrightarrow L^2(Q_{(s,T)})$ is compact by the Rellich-Kondrachov theorem.
Hence, there exists a subsequence of $(u^N)$ (still denoted by $(u^N)$) that converges strongly to $u$ in
$L^2(Q_{(s,T)})$ and, after passing to a further subsequence, almost everywhere in $Q_{(s,T)}$.  
Consequently, 
$$\varphi(x,u^N)\overset{}{\rightharpoonup} \varphi(x,u) \ \mbox{  in } L^2(Q_{(s,T)})
$$  as $N\to\infty$
by  \cite[Lemma 1.3]{Lions}.
Moreover, since $u$ is continuous  in $t$, the function $\varphi(x,u(x,t))$ is also continuous  in $t$.

Finally,  (\ref{G6z}) implies the weak convergences 
$$
\nabla u_t^N\overset{}{\rightharpoonup} \nabla u_t\ \mbox{  and }\ 
\nabla u^N\overset{}{\rightharpoonup} \nabla u \ \mbox{  in } L^2(s,T; L^2(\Omega))
$$ 
 as $N\to\infty$.
It follows that
$$\left[\Phi_\al u^N\right]\overset{}{\rightharpoonup} \left[\Phi_\al u\right]\ \mbox{  in } L^2(s,T; L^2(\Omega)).
$$

As a consequence of the derived convergences and the identities \reff{G2}, the  limit function~ $u$ satisfies (\ref{G1}).  
We thus conclude that $u$ is 
a weak solution to the problem (\ref{G0}), \reff{2},  \reff{2p} in $Q_{(s,T)}$ in the sense of Definition~\ref{def-obm2}.

{\it 3. Uniqueness of the weak solution.} 
 We prove the uniqueness in a standard way. Suppose, on the contrary,  that there exist two weak solutions 
 $u$ and $\tilde u$ to the problem
(\ref{G0}), \reff{2},  \reff{2p}. Then the function $z=u-\tilde u$ 
  satisfies the trivial initial conditions, namely $z(s,x)=0$ and $z_t(s,x)= 0$, as well as  the following  equality for all $v\in L^2(s,\tau; H_0^1(\Omega))$ and all $\tau\in (s,T]$:
    \beq \label{riv48g}
    \begin{array}{rr}
\displaystyle         \int_{Q_{(s,\tau)}}\Bigl(z_{tt} v+
                a^2\nabla z\cdot \nabla v +
            b(t)\nabla z_t \cdot \nabla v+
            (\varphi(x,u)-\varphi(x,\tilde u))v \\
            - \left(\left[\Phi_\al u\right](t,x) - \left[\Phi_\al \tilde u\right](t,x)\right)v
            \Bigl)\dd x    \dd t= 0.
    \end{array}
\ee
For $v=z_t$, the equation (\ref{riv48g}) reads as follows:
  \beq   \label{ed1obm}
 \begin{array}{rr}\displaystyle
        \int_{Q_{(s,\tau)}}
        \Bigl(z_{tt} z_t+
                a^2\nabla z\cdot \nabla z_t +
            b(t)\nabla  z_t \cdot \nabla z_t+
                        (\varphi(x,u)-\varphi(x,\tilde u))z_t\\
             -\left(\left[\Phi_\al u\right](t,x) - \left[\Phi_\al \tilde u\right](t,x)\right)z_t
            \Bigl)\dd x    \dd t= 0.
 \end{array}
\ee
Similarly to obtaining the estimate  (\ref{G44})  from the equality  (\ref{G3}), we derive the following estimate from \reff{ed1obm}:
\begin{equation} \label{G4obm}
\begin{array}{ll}\displaystyle\|z_{t}(\tau,\cdot)\|^2_{L^2(\Omega)} +
 \|\nabla z(\tau,\cdot)\|^2_{L^2(\Omega)}	+\int_{s}^\tau  \|\nabla z_t(t,\cdot)\|^2_{L^2(\Omega)} \dd t 	\\ [4mm]
 \displaystyle\qquad \le
  A_2\int_s^{\tau}\left( \|z_{t}(t,\cdot)\|^2_{L^2(\Omega)} +
 \|\nabla z(t,\cdot)\|^2_{L^2(\Omega)}+\int_s^{t}\|\nabla z_t(\theta,\cdot)\|^2_{L^2(\Omega)} \dd \theta \right) \dd t,
\end{array}\end{equation}
 where the constant $A_2$ is the same as in  (\ref{G44}).
Applying Gr\"onwall's argument to \reff{G4obm} yelds the inequality 
$$
\|z(\tau,\cdot)\|^2_{L^2(\Omega)} +\|z_{t}(\tau,\cdot)\|^2_{L^2(\Omega)} +
\|\nabla z(\tau,\cdot)\|^2_{L^2(\Omega)}	+\int_{s}^\tau  \|\nabla z_t(t,\cdot)\|^2_{L^2(\Omega)} \dd t  \le 0,
$$
being true for all $\tau\in[s,T]$. This implies the desired  uniqueness.

{\it 4. Strong solutions.} 
Let $\al\in[0,\al_1]$ be arbitrary fixed and let   $u$ be the corresponding weak solution to the problem  (\ref{G0}), \reff{2},  \reff{2p} in $Q_{(s,T)}$, obtained in Steps 1 and 2. 
For  almost every $t \in (s,T),$ consider the equality
\begin{equation} \label{utt2}
	\Delta (a^2 u + b(t) u_t) =  u_{tt} +  \varphi(x, u)  - \left[\Phi_\al u\right](t,x)- F_\al(t,x), \quad  x\in\Omega,
\end{equation}
which is satisfied in the weak sense according to (\ref{G1}).
In the notations 
\begin{equation} \label{ZZ}
	Z(x,t) =  u_{tt} +  \varphi(x, u)  -\left[\Phi_\al u\right](t,x)- F_\al(t,x)
\end{equation}
and 
\begin{equation} \label{zz}
	z(x,t) =  a^2 u + b(t) u_t,
\end{equation}
the equation \reff{utt2} takes the form
\beq\label{Zz} \Delta z = Z(t,x),
\ee
where $\Delta $ denotes the spatial Laplacian.
As $u$ is fixed, for simplicity we dropped the dependence on $u$ in the above notations.
By Step 2, $Z(t,\cdot)\in L^2(\Omega)$ and $z(t,\cdot)\in H_0^1(\Omega)$ for almost every $t\in(s,T)$. In addition, there exists a positive constant $A_6$,  independent of  $\al\in[0,\al_1]$,
such that
$$
\|Z(t,\cdot)\|_{L^2(\Omega)} \le A_6\quad\mbox{ for almost every }t \in (s,T).
$$
By considering the equation \reff{Zz} with respect to $z(t,\cdot)$ for each  $t \in [s,T]$
(with $t$ treated as a parameter), we conclude that $z(t,\cdot)$ is a weak solution to  \reff{Zz}. Since $\d\Omega\in C^2$, it follows from  \cite[Theorem 4, p. 317]{Evans} that
$z(t,\cdot)\in H^2(\Omega)$, for almost every $t \in (s,T)$. Moreover, the following estimate is satisfied for almost every $t \in (s,T)$:
\beq\label{ZZZ}\begin{array}{ll} 
	 \|z(t,\cdot)\|_{H^2(\Omega)} \le A_7\left(\| z(t,\cdot)\|_{L^2(\Omega)} +
	\| Z(t,\cdot) \|_{L^2(\Omega)}\right)   \\ [2mm]
	\qquad \le A_7\left(a^2 \| u(t,\cdot)\|_{L^2(\Omega)} + \bar b\| u_t(t,\cdot)\|_{L^2(\Omega)}
	+ \| Z(t,\cdot) \|_{L^2(\Omega)}\right) 
\end{array}
\ee
for a constant $A_7$ that is independent of $t$ and $\al\in[0,\al_1]$.

Next, we consider the  equation (\ref{utt2}) as a linear nonhomogeneous equation with respect to $u(\cdot,x)$, now with $x$ treated  as a parameter. The continuous solution $u(\cdot,x)$ to \reff{zz}
 can explicitly be given  by the formula
\begin{equation} \label{utt44}
u(t,x) = \exp\left\{-a^2\int_s^t \frac{\dd \theta}{b(\theta)}\right\} u(s,x) + \int_{s}^t  \exp\left\{-a^2\int_\theta^t \frac{\dd\theta_1}{b(\theta_1)}\right\}\frac{z(\theta,x)}{b(\theta)} \dd\theta,
\end{equation}
where $u(s,x)=u_0(x)$. 
As $u_0\in  H^2(\Omega)$ and $z(t,\cdot)\in H^2(\Omega)$ for all $t\in[s,T]$, we conclude  that 
$u\in C([s,T];H^2(\Omega))$
and that
$$\|  u(t,\cdot) \|_{H^2(\Omega)} \le A_8\quad \mbox{ for all } t \in [s,T],
$$
being true for a  positive constant $A_8$ chosen to be uniform in $\al\in[0,\al_1]$.  Moreover, because of  (\ref{ZZZ}),
$z\in L^\infty(s,T; H^2(\Omega))$. Then it follows from (\ref{zz}) that $u_t \in L^\infty(s,T; H^2(\Omega)).$
Consequently, $u$ satisfies the equation (\ref{G0}) almost everywhere. 
\qed\end{proof}

\subsubsection{Auxiliary problems on half-lines}\label{proof_aper}
Here we extend to the half-lines $[s,\infty)$ the strong solutions constructed in Subsection~\ref{interval} on finite time intervals.

\begin{defn}
	\label{def-semiaxis}
	Let $s\in\R$  be arbitrary real number.  A  function $u$  is  called a  {\rm bounded  strong solution}  
	to the initial-boundary value problem   (\ref{G0}), \reff{2},  \reff{2p} in $Q_{(s,\infty)}$ if 
		\beq\label{u-reg}
		\begin{array}{cc}
		u \in BC([s,\infty); H_0^1(\Omega)\cap H^2(\Omega)),\quad 
		u_t \in BC([s,\infty); H_0^1(\Omega)) \cap L^\infty (s,\infty; H^2(\Omega)), \\ [2mm]
		u_{tt}\in L^\infty (s,\infty; L^2(\Omega)),
\end{array}	
\ee
and	$u$ satisfies the initial conditions  (\ref{2p}) and the equation (\ref{G0}) almost everywhere.
\end{defn}

\begin{thm}  \label{lem41}
	Let $s\in\R$ be  fixed and  let 	$u_0, u_1 \in H_0^1(\Omega)\cap H^2(\Omega)$. 
	Suppose that  Conditions {(A1)}--{(A5)} are satisfied.
	If $L_0$,  $b_1$, and $\|\nabla K\|_{L^2(\Omega)}$
	are sufficiently small, then there exists $\al_2\in[0,\al_1]$ such that, for all $\al\in[0,\al_2]$, the problem   (\ref{G0}), \reff{2},  \reff{2p} in $Q_{(s,\infty)}$ 	 has a  unique bounded strong solution.
\end{thm}
\begin{proof} For each fixed $\al\in[0,\al_1]$, we construct a function $u$  on $Q_{(s,\infty)}$  such  that, for any $T>s$, its restriction  to $Q_{(s,T)}$ is the unique strong solution	to the problem (\ref{G0}), \reff{2},  \reff{2p} in $Q_{(s,T)}$ provided by Theorem \ref{T-obm}. 
	
Our goal is to show that the thus constructed function $u$ is a  bounded strong  solution on  $Q_{(s,\infty)}$ to  the problem  (\ref{G0}), \reff{2},  \reff{2p} in $Q_{(s,\infty)}$
	in the sense of Definition \ref{def-semiaxis}, provided that 
	the data $L_0$,  $b_1$,  $\|\nabla K\|_{L^2(\Omega)}$, and the parameter $\al$ are sufficiently small.
	
		The proof is divided into three claims.
	
	\begin{claim}\label{claim1}
	If $L_0$,  $b_1$, and $\|\nabla K\|_{L^2(\Omega)}$
	are sufficiently small, then there exists $\al_2\in[0,\al_1]$ such that, for all $\al\in[0,\al_2]$,
	the function $u$ satisfies the following regularity properties:
	$u \in BC([s,\infty), H_0^1(\Omega))$ and $u_t \in BC([s,\infty), L^2(\Omega)).$
\end{claim}
\begin{proof} Let $T>s$ be arbitrary fixed.	
For the $N$-th Galerkin's approximation $u^{N}$ of $u$ on $[s,T]$, we introduce the energy function
\beq\label{E}
{\mathcal {E}}^N(t) =  \frac{1}{2}\int_\Omega |u_{t}^N(t,x)|^2\dd x
+  \frac{a^2}{2}\int_\Omega  \|\nabla u^N(t,x)\|^2 \dd x.
\ee
For $t\in[s,T]$, rewrite \reff{G4pr} in terms of $\E^N$, as follows:
\beq \label{G5}\begin{array}{cc}
  \displaystyle 	 \mathcal E^N(t)
 + \int_{Q_{(s,t)}}  b(\tau)\|\nabla u_{\tau}^N\|^2 \dd x \dd\tau
 \displaystyle    + \int_{Q_{(s,t)}}\varphi(x,u^N)\, u_{\tau}^N \dd x \dd \tau
  \nonumber\\
 \displaystyle 	 =     \mathcal E^N(s)
 + \int_{Q_{(s,t)}}\Bigl(F_\al(\tau,x)+[\Phi_\al u^N](\tau,x)\Bigr)u_\tau^N\dd x \dd\tau.
\end{array}
\ee
 Hence,
 \beq\label{G50}
 \begin{array}{cc}
 \displaystyle   \frac{\dd \mathcal E^N(t)}{\dd t} = - \int_{\Omega} b(t)
 \|\nabla u_{t}^N\|^2 \dd x
 -  \int_{\Omega}\varphi(x,u^N) u_{t}^N \dd x  +  \int_{\Omega}\left(F_\al(t,x)+[\Phi_\al u^N](t,x)\right)u_t^N\dd x.
 \end{array}
\ee
The following equality is obtained from \reff{G2}, analogously to  (\ref{G3}):
$$
\begin{array}{cc} 
 \displaystyle  \int_{Q_{(s,\tau)}}\Bigl(u_{tt}^N u^N +  a^2\nabla u^N \cdot\nabla u^N +
b(t) \nabla u^N_{t} \cdot\nabla u^N  + \varphi(x,u^N) u^N  \Bigl)\dd x \dd t \\	
\displaystyle  = \int_{Q_{(s,\tau)}} \Bigl(F_\al(\tau,x)+[\Phi_\al u^N](\tau,x)\Bigr) u^N\dd x \dd t.
\end{array}
$$
Hence,  for almost every $t\in (s,T)$, we have
$$
\begin{array}{cc} 
 \displaystyle  \int_{\Omega}\Bigl(u_{tt}^N u^N +  a^2\nabla u^N \cdot\nabla u^N +
b(t) \nabla u^N_{t} \cdot\nabla u^N  + \varphi(x,u^N) u^N  \Bigl)\dd x \\	
  \displaystyle = \int_{\Omega} \Bigl(F_\al(t,x)+[\Phi_\al u^N](t,x)\Bigr) u^N\dd x,
\end{array}
$$
which implies
\beq\label{G6}
\begin{array}{cc}
 \displaystyle \frac{\dd}{\dd t}
\int_\Omega \left( u^N u_{t}^N dx+\frac{ b(t)}{2} 
\|\nabla u^N\|^ 2\right) \dd x 
= \int_\Omega (u_t^N)^2 \dd x +\left(\frac{b^\prime(t) }2-  a^2\right) \int_\Omega  \|\nabla u^N\|^2  dx\\ [4mm]
\displaystyle  - \int_\Omega  \varphi (x,u^N) u^N \dd x 
\displaystyle  + \int_\Omega \Bigl(F_\al(t,x)+[\Phi_\al u^N](t,x)\Bigr)u^N \dd x.
\end{array}
\ee
Consider the function 
\beq \label{W}
W^N_\eta(t) = \mathcal E^N(t) + \eta
\int_\Omega \left( u^N u_{t}^N +\frac{ b(t)}{2} 
\|\nabla u^N\|^ 2\right) \dd x
\ee
on $[s,T]$,
where $\eta>0$ will be chosen later to ensure exponential decay of 
 $W^N_\eta$, uniformly in~$N$, as $t\to\infty$. We use the bound
\begin{eqnarray} \label{WN-lower}
W^N_\eta(t) \ge\left( \frac{1-\eta}{2} \right)\left\|u_t^N(t,\cdot)\right\|^2_{L^2(\Omega)} +
\left(\frac{a^2+ (\underline b-\gamma^2)\eta}{2} \right) \left\|\nabla u^N(t,\cdot)\right\|^2_{L^2(\Omega)}\ge 0,
\end{eqnarray}
where the first inequality  follows directly from the definitions of $\mathcal  E^N(t)$ and $W^N_\eta(t)$, while the second one holds provided that $\eta$ lies in the range 
\beq\label{eta}
0<\eta \le \min\left\{1,a^2 / |\gamma^2 - \underline b|\right\}.
\ee
Below $\eta$ will be fixed in accordance with \reff{eta}.

In view of  (\ref{G50}) and (\ref{G6}), we obtain the following equality,
where $\eta_1<\eta$ is an auxiliary technical positive parameter:
$$
\begin{array}{ll}
  \displaystyle \frac{\dd W^N_\eta(t)}{\dd t}=\displaystyle -\eta_1    W^N_\eta(t)+\eta_1    W^N_\eta(t)
  - \int_{\Omega} b(t)\| \nabla u_{t}^N\|^2\dd x  - \int_{\Omega}\varphi(x,u^N) u_{t}^N \dd x \\ [5mm]
 \qquad \quad\displaystyle\displaystyle 
  + \int_{\Omega}\left(F_\al(t,x)+[\Phi_\al u^N](t,x)\right)u_t^N\dd x 
  +\eta\int_\Omega \Bigl((u_t^N)^2  +  \Bigl(\frac{b^\prime(t)}2-a^2 \Bigl) \|\nabla u^N(t,\cdot)\|^2\\ [5mm]
 \qquad \quad\displaystyle-  \varphi (x,u^N) u^N
 +  \left(F_\al(t,x)+[\Phi_\al u^N](t,x)\right)u^N  \Bigl)\dd x.
\end{array}
$$
Using  (\ref{phi}), we derive the estimates
\beq\label{dW00}
\begin{array}{rcl} 
\displaystyle  \int_{\Omega} \left|\left[\Phi_\al u^N\right](t,x)u^N\right| \dd x &\le& \displaystyle
\frac{1}{2}\eps_1(\al) \left((2\gamma^2 + 1)\|\nabla u^N(t,\cdot)\|_{L^2(\Omega)}^2+\|\nabla u_t^N(t,\cdot)\|_{L^2(\Omega)}^2
\right) \\[3mm]
\qquad &\le& \displaystyle \eps_6(\al) \left(\|\nabla u^N(t,\cdot)\|_{L^2(\Omega)}^2+\|\nabla u_t^N(t,\cdot)\|_{L^2(\Omega)}^2
\right) \\[5mm]
\displaystyle  \int_{\Omega} \left|\left[\Phi_\al u^N\right](t,x)u^N_t\right| \dd x &\le& \displaystyle
\frac{1}{2}\eps_1(\al) \left(\|\nabla u^N(t,\cdot)\|_{L^2(\Omega)}^2+(2\gamma^2 + 1)\|\nabla u_t^N(t,\cdot)\|_{L^2(\Omega)}^2
\right) \\[3mm]
\qquad &\le& \displaystyle \eps_6(\al) \left(\|\nabla u^N(t,\cdot)\|_{L^2(\Omega)}^2+\|\nabla u_t^N(t,\cdot)\|_{L^2(\Omega)}^2
\right),
\end{array} 
\ee
where 
$$
\eps_6(\al) = \displaystyle\frac{1}{2}\eps_1(\al)(2\gamma^2 + 1).
$$ 
Combining (\ref{riv16}),  \reff{W}, and (\ref{dW00}) with the inequalities:
$$\begin{array}{rcl}
\displaystyle \int_\Omega | F_\al(t,x)u^N|\dd x  &\le&\displaystyle \frac{\eps_7 \ga^2}{2}\int_\Omega (\nabla u^N)^2 \dd x + \frac{1}{2\eps_7} \sup_t
 \int_\Omega F_\al(t,x)^2 \dd x,  \\ [5mm]
 \displaystyle \int_\Omega | F_\al(t,x)u_t^N|\dd x  &\le&\displaystyle \frac{\eps_7 \ga^2}{2}\int_\Omega (\nabla u_t^N)^2 \dd x + \frac{1}{2\eps_7} \sup_t
 \int_\Omega F_\al(t,x)^2 \dd x,  \\ [5mm]
\displaystyle \int_\Omega | u^N u_t^N| \dd x &\le&\displaystyle \frac{\ga^2}{2}\int_\Omega (\nabla u^N)^2 \dd x + \frac{\ga^2}{2}\int_\Omega (\nabla u_t^N)^2 \dd x, \\ [5mm]
\end{array}
$$
valid for all $\eps_7>0$ and $\al>0$, 
we derive 
\beq\label{dW}
\begin{array}{ll} 
\hspace{-1mm}\displaystyle \frac{\dd W^N_\eta(t)}{\dd t} \displaystyle\le\displaystyle  -\eta_1 W^N_\eta(t)  + \frac{1+\eta}{2\eps_7} \sup_t  \left\|F_\al(t,\cdot)\right\|^2_{L^2(\Omega)} + \frac {\eta}{2}\biggl[ 
\eta_1\left(\frac{a^2}{\eta} +\ga^2+\overline{b}\right)
\\ [3mm]
\displaystyle \qquad-\left( 2 a^2-
b_1-\eps_6(\al)-2 L_0\ga^2- \ga^2\eps_7\right)+\frac{1}{\eta}\Bigl(\eps_6(\al)+L_0\ga^2\Bigr)
\biggl] \left\|\nabla u^N(t,\cdot)\right\|^2_{L^2(\Omega)} \\ [5mm]
\displaystyle\qquad + \frac{1}{2}\biggl[\eta_1\ga^2(1+\eta)- \left(2\underline b-  \gamma^2L_0-\eps_6(\al)- \ga^2\eps_7\right)+\eta
\left(\eps_6(\al)+2\ga^2
\right) 
 \biggr]\left\|\nabla u_t^N(t,\cdot)\right\|^2_{L^2(\Omega)}.
\end{array}
\ee
For sufficiently small $L_0$,  $b_1$,
and $\|\nabla K\|_{L^2(\Omega)}$, 
we can choose the parameters $\eta,$ $\eta_1$,  $\eps_7$, and~$\al$  in such a way that the  terms involving the factors $\|\nabla u^N\|_{L^2(\Omega)}$ and $\|\nabla u_t^N\|_{L^2(\Omega)}$ are non-positive. More precisely, we first assume that $L_0$,  $b_1$,
and $\|\nabla K\|_{L^2(\Omega)}$ are sufficiently small so that
$$ 2 a^2-
b_1-\eps_6(0)-2L_0\ga^2>0 \quad\mbox{and}\quad  
2\underline b-  \gamma^2L_0-\eps_6(0)>0.
$$
Then choose small enough $\al_2\in[0,\al_1]$ such that for all $\al\in[0,\al_2]$ the following inequalities are true:
$$ 2 a^2-
b_1-\eps_6(\al)-2L_0\ga^2>0 \quad\mbox{and}\quad  
2\underline b-  \gamma^2L_0-\eps_6(\al)>0.
$$
Afterwords fix first $\eps_7>0$ and then $B>0$ such that for all $\al\in[0,\al_2]$ it holds
$$
B\ge\min\Bigl\{ 2 a^2-b_1-\eps_6(\al)-2L_0\ga^2,2\underline b- 
 \gamma^2L_0-\eps_6(\al)\Bigr\}-\ga^2\eps_7>0.
$$ 
Next,  choose $\eta>0$ sufficiently small so that it satisfies not only \reff{eta} but also 
$$B-\eta\left(\eps_6(\al)+2\ga^2\right)>0$$ for all $\al\in[0,\al_2]$. Then fix $B_1>0$ such that the inequality 
\beq\label{B1}
B_1\ge B-\eta
\left(\eps_6(\al)+2\ga^2\right)
\ee
is true for all $\al\in[0,\al_2]$.
Finally,  choose  $\eta_1>0$ sufficiently small and refine the smallness assumptions on
$L_0$,  $b_1$,
 $\|\nabla K\|_{L_0^2(\Omega)}$, and $\al_2$ so that the bounds
$$
B_1-\eta_1\left(\frac{a^2}{\eta}+\ga^2+\overline{b}\right)-\frac{1}{\eta}\biggl(\eps_6(\al)+L_0\ga^2\biggr)>0\quad\mbox{and}\quad 
B_1-\eta_1\ga^2(1+\eta)>0
$$
hold for all  $\al\in[0,\al_2]$.
 
 Consequently,   (\ref{dW}) implies the inequality
$$ \frac{\dd W^N_\eta(t)}{\dd t} \le -\eta_1 W^N_\eta(t) + \frac{\eta + 1}{2\eps_7} \sup_t  \|F_\al(t,\cdot)\|^2_{L^2(\Omega)}.$$
Applying Gr\"onwall’s inequality then yields
\beq
	\label{WN-upper}
\displaystyle W^N_\eta(t) \le e^{-\eta_1(t-s)} W^N_\eta(s) + \frac{\eta + 1}{2\eps_7\eta_1}\sup_t  \|F_\al(t,\cdot)\|^2_{L^2(\Omega)},
\ee
which holds for all $t\in[s,T]$.
By (\ref{zb}), there exists a constant, depending only on the initial data $u_0$ and $u_1$, such that
$W^N_\eta(s)$ is bounded uniformly in $N$. 
Note that the estimate \reff{WN-upper} is uniform with respect to $s\in \R$,   $T\ge s$,  $\al\in[0,\al_2]$, and with respect to the Galerkin's approximations $u^N$ on $[s,T]$.
It follows that  $W^N_\eta(t)$ is uniformly bounded with respect to all the above parameters, and this bound remains valid for arbitrary
$T\ge s$. 
Combining this with \reff{WN-lower},  \reff{E}, \reff{W},  \reff{zb}, and \reff{G6z},  we obtain the following boundedness estimate for the function $u$ 
constructed at the beginning of the proof:
$$
\|u(t,\cdot)\|^2_{L^2(\Omega)} +\|u_t(t,\cdot)\|^2_{L^2(\Omega)} +
 \|\nabla u(t,\cdot)\|^2_{L^2(\Omega)}\le A_7,
$$
where the constant $A_7$  depends on the initial data $u_0$ and $u_1$,
but is independent of $s\in\R$, $t\ge s$, and  $\al\in[0,\al_2]$.  This completes the proof of the claim.
\qed\end{proof}

\begin{claim}\label{claim2}
If $L_0$,  $b_1$, and $\|\nabla K\|_{L^2(\Omega)}$
are sufficiently small, then the constant $\al_2$ can be chosen smaller, if necessary, such that, for every  $\al\in[0,\al_2]$, the constructed function $u$ possesses the  regularity \reff{u-reg}.
\end{claim}

\begin{proof}
Fix $T>s$ and introduce the function
\begin{eqnarray*}
 \mathcal F^N(t) =  \frac{1}{2}\int_\Omega |u_{tt}^N|^2\dd x
+  \frac{a^2 }{2}\int_\Omega  \|\nabla u_t^N\|^2\dd x,
\end{eqnarray*}
on $[s,T]$, where  $u^{N}$ is the $N$-th Galerkin's approximation of $u$. In this notation, the equality (\ref{G3ttt}) for $t\in[s,T]$ reads as follows:
$$
\begin{array}{cc}
	  \displaystyle 	  \mathcal F^N(t)
	+ \int_{Q_{(s,t)}}  \Bigl(b(\tau)\|\nabla u^N_{\tau\tau}\|^2+ b'(\tau)\nabla u^N_{\tau}\cdot\nabla u_{\tau\tau}^N
	 + \partial_2\varphi(x,u^N) u_{\tau}^Nu_{\tau\tau}^N \Bigr)\dd x\dd  \tau\\
\displaystyle 	=
	 \mathcal F^N(s) + \int_{Q_{(s,t)}}\partial_\tau \Bigl(F_\al(\tau,x)+[\Phi_\al u^N](\tau,x)\Bigr) u_{\tau\tau}^N \dd x \dd\tau.
\end{array}
$$
Hence,
\beq\label{G500}
\begin{array}{rcl} 
\displaystyle	 \frac{\dd \mathcal F^N(t)}{\dd t} &=&\displaystyle - \int_{\Omega} b(t)\|\nabla u_{tt}^N\|^2\dd x-\int_{\Omega}   b'(t)\nabla u^N_{t}\cdot\nabla u_{tt}^N\dd x
	\\ [5mm]
	 &&\displaystyle-  \int_{\Omega}\partial_2\varphi(x,u^N) u_{t}^N u_{tt}^N \dd x +
	  \int_{\Omega} \partial_t \Bigl(F_\al(t,x)+[\Phi_\al u^N](t,x)\Bigr)  u_{tt}^N  \dd x.
\end{array}
\ee

Along with the function $\mathcal F^N(t)$, consider its perturbed version,
defined by
\beq\label{V}
 V^N_\eta (t) =  \mathcal F^N(t) +
\eta \int_\Omega \left(u_t^N u_{tt}^N +
\frac{ b(t)}{2}\|\nabla u_t^N \|^2\right)\dd x,
 \ee
where $\eta$ is a positive parameter. Clearly,
$$
 V^N_\eta (t) \ge \left(\frac {1-\eta}{2}\right)\|u_{tt}^N(t,\cdot)\|_{L^2(\Omega)}+
\left(\frac{a^2+\underline b-\eta \gamma^2}{2}  \right) \|\nabla u_t^N(t,\cdot)\|_{L^2(\Omega)}\ge 0
$$
whenever 
$$
0<\eta \le \min\left\{1,(a^2 +\underline b) /  \gamma^2\right\}.
$$
In view of \reff{G500} and \reff{V}, we derive the following formula
for $\frac{\dd V_\eta^N(t)}{\dd t}$
(with an auxiliary technical parameter $\eta_1>0$):
$$
\begin{array}{ll}
\displaystyle \frac{\dd V_\eta^N(t)}{\dd t} = -\eta_1V_\eta^N(t)+\eta_1V_\eta^N(t)+\frac{\dd \mathcal F^N(t)}{\dd t}
 + \eta \frac{\dd }{\dd t}\int_\Omega \left(u_t^N u_{tt}^N +
\frac{ b(t)}{2}\|\nabla u_t^N \|^2\right)\dd x
  \\ [4mm]
\displaystyle\qquad =-\eta_1V_\eta^N(t) + \eta_1\biggl(\frac{1}{2}\int_\Omega\left( |u_{tt}^N|^2 +
a^2 \| \nabla u^N_t\|^2 \right) \dd x
+\frac{\eta}{2}\int_\Omega\left( 2u_t^N u_{tt}^N +
 b(t) \|\nabla u_t^N\|^2 \right) \dd x\biggr) \\ [4mm]
\displaystyle \qquad  \ \ \ - b(t)\int_{\Omega}\|\nabla u_{tt}^N\|^2\dd x-b'(t)\int_\Omega \nabla u_{t}^N \cdot \nabla u_{tt}^N \dd x
	-  \int_{\Omega}\partial_2\varphi(x,u^N) u_{t}^N u_{tt}^N \dd x\\ [4mm]
\displaystyle\qquad  \ \ \	+  \int_{\Omega} \partial_t \left(F_\al(t,x)+[\Phi_\al u^N](t,x)\right) u_{tt}^N \dd x  +  \eta\int_\Omega \left((u_{tt}^N)^2 
- \left(a^2 +  \frac{b^\prime(t)}{2}\right) \|\nabla u_t^N\|^2 \right)\dd x
	 \\ [4mm]
 \displaystyle \qquad  \ \ \ -  \eta\int_\Omega  \partial_2 \varphi (x,u^N) (u_t^N)^2 \dd x
	 +  \eta\int_\Omega \partial_t \left(F_\al(t,x)+[\Phi_\al u^N](t,x)\right)u_t^N \dd x.
\end{array}
$$
 Using the estimate (\ref{der}), we obtain the inequalities 
$$
\begin{array}{rcl} 
\displaystyle  \int_{\Omega} \left|\left[\partial_t\Phi_\al u^N\right](t,x)u^N_t\right| \dd x& \le& \displaystyle
\frac{1}{2}\eps_2(\al) \left((2\gamma^2 + 1)\|\nabla u^N_t(t,\cdot)\|_{L^2(\Omega)}^2+\|\nabla u_{tt}^N(t,\cdot)\|_{L^2(\Omega)}^2
\right) \\[2mm]
\qquad &\le& \displaystyle \eps_8(\al) \left(\|\nabla u^N_t(t,\cdot)\|_{L^2(\Omega)}^2+\|\nabla u_{tt}^N(t,\cdot)\|_{L^2(\Omega)}^2
\right), \\[4mm]
\displaystyle  \int_{\Omega} \left|\left[\partial_t\Phi_\al u^N\right](t,x)u^N_{tt}\right| \dd x &\le& \displaystyle
\frac{1}{2}\eps_2(\al) \left(\|\nabla u^N_t(t,\cdot)\|_{L^2(\Omega)}^2+(2\gamma^2 + 1)\|\nabla u_{tt}^N(t,\cdot)\|_{L^2(\Omega)}^2
\right) \\[2mm]
\qquad &\le& \displaystyle \eps_8(\al) \left(\|\nabla u^N_t(t,\cdot)\|_{L^2(\Omega)}^2+\|\nabla u_{tt}^N(t,\cdot)\|_{L^2(\Omega)}^2
\right).
\end{array} 
$$
where 
$$\eps_8(\al) = \displaystyle\frac{1}{2}\eps_2(\al)(2\gamma^2 + 1).
$$

Analogously to \reff{dW}, we now derive the estimate
\beq\label{dW1}
\begin{array}{ll}
 \displaystyle\frac{\dd V_\eta^N(t)}{\dd t} \le -\eta_1 V_\eta^N(t)
+\widetilde F^N_\al(t)\\ [5mm]
\displaystyle\qquad+\frac {\eta}{2}\left[ 
\eta_1\left(\frac{a^2}{\eta}+\overline{b}\right)- 
\left(2 a^2- b_1 - \frac{b_1}{\eta}\right)
+ 2 \eps_{8}(\al)\left(1+\frac{1}{\eta}\right)
\right] \left\|\nabla u^N_t(t,\cdot)\right\|^2_{L^2(\Omega)} \\ [5mm]
\displaystyle \qquad+ \frac{1}{2}\Bigl[\eta_1\ga^2(1+\eta)- 2\underline b
+ b_1 + \gamma^2(L_0 +\eps_9+ 2\eta) + 2(1 + \eta)\eps_{8}(\al)
\Bigr]\left\|\nabla u_{tt}^N(t,\cdot)\right\|^2_{L^2(\Omega)},
\end{array}
\ee
where $\eps_9>0$ is a positive constant and
$$\widetilde F_\al^N(t) =\displaystyle  \frac{1+ \eta}{2\eps_9}
\left\|\partial_t F_\al(t,\cdot)\right\|_{L^2(\Omega)}^2 
+ \frac{\eta}{2} \left(2\eta L_0 + L_0 + \eta  \eps_9 + \eta_1 \eta \right)\left\|u_t^N(t,\cdot)\right\|_{L^2(\Omega)}^2.$$
Similarly  to \reff{dW}, if $L_0 , b_1$, and $\|\nabla K\|_{L^2(\Omega)}$ are  sufficiently small, then the constants  $\eps_0$, $\al_2$, $\eta$, and $\eta_1$ can be chosen small enough so that the last two terms in \reff{dW1} (those involving $\|\nabla u_t^N\|_{L^2(\Omega)}$ and $\|\nabla u_{tt}^N\|_{L^2(\Omega)}$) are non-positive for all
$\al\in(0,\al_2]$. Consequently, the inequality (\ref{dW1}) implies the estimate
\begin{eqnarray*}
 \frac{\dd V_\eta^N(t)}{\dd t} \le -\eta_1 V_\eta^N(t) + \widetilde F_\al^N(t),
\end{eqnarray*}
which, by Gr\"onwall’s inequality, leads to
\begin{eqnarray} \label{*2}
\displaystyle V_\eta^N(t) \le e^{-\eta_1(t-s)} V_\eta^N(s) +\frac{1}{\eta_1}\widetilde F_\al^N(t),
\end{eqnarray}
being true for all $t\in[s,T]$.
Due to (\ref{zb}) and (\ref{ocut}), the function $V_\eta^N(s)$ is bounded uniformly with respect to $N$, independently of $s\in\R$, $T>s$, $\al\in[0,\al_2]$, and $u^N$. Combining this with Claim \ref{claim1}, we conclude that the  function $\widetilde F_\al^N(t)$ is uniformly bounded for all $t\in[s,\infty)$,  $N\in\N$, and $\al\in[0,\al_2]$.

The following estimate for $u$ now easily follows from \reff{G6z}, \reff{V}, and \reff{*2}:
\beq\label{uuuu}
\|u_{tt}(t,\cdot)\|^2_{L^2(\Omega)} +\|\nabla u_t(t,\cdot)\|^2_{L^2(\Omega)}\le A_8,
\ee
where the constant $A_8$  depends on the initial data $u_0$ and $u_1$,
but is independent of $s\in\R$, $t\ge s$, and $\al\in[0,\al_2]$.

To prove that $\Delta u \in BC([s,\infty); L^2(\Omega))$ and $\Delta u_t \in L^\infty(s,\infty; L^2(\Omega)),$
we follow the proof of Step 3 in the proof of Theorem \ref{T-obm} and conclude that there exists a positive constant $A_9$
such that the function
$Z(t,x)$, defined by (\ref{ZZ}),
fulfills the estimate
$\|Z(t,x)\|_{L^2(\Omega)} \le A_9$ for all $t\ge s$.
Therefore, because of  (\ref{zz}), (\ref{ZZZ}), and (\ref{utt44}), it holds
\beq\label{*3}
\|  u(t,\cdot) \|_{H^2(\Omega)} + \|  u_t(t,\cdot) \|_{H^2(\Omega)} \le A_{10}
\ee
 where  $A_{10}$ is a positive constant, which is independent of $s$, $t \ge s$, and $\al\in[0,\al_2]$.
\qed\end{proof}

\begin{claim}
The function $u$ constructed above is the unique bounded strong solution to the problem   (\ref{G0}), \reff{2},  \reff{2p} in $Q_{(s,\infty)}$.
\end{claim}
\begin{proof}
The existence follows from the above construction together with Claims~\ref{claim1} and \ref{claim2}. Uniqueness is a consequence of the uniqueness of strong solutions on finite time intervals provided by Theorem \ref{T-obm}.
\qed\end{proof}
The proof of Theorem \ref{lem41} is complete.
\qed\end{proof}

\subsubsection{Bounded strong solution on $\R$}\label{R}
Suppose that the data $L_0$,  $b_1$,  $\|\nabla K\|_{L^2(\Omega)}$, and the parameter $\al_2>0$ are sufficiently small so that the assumptions of Theorem~\ref{lem41} are fulfilled.

Here we construct a unique bounded strong solution on the whole line $\R$
 as the limit of solutions to auxiliary problems defined on the half-lines.
Specifically, for each $m \in \mathbb N$, we consider the following initial-boundary value problem in $Q_{(-m,\infty)}$
 corresponding to the equation
\begin{equation} \label{1m}
u_{tt} - a^2\Delta u -  b(t)\Delta u_t	
	+  \varphi(x,u)  = F^m_{\bar\al}(t,x)+\left[\Phi_{\bar\al}^m u\right](t,x),\quad t\in (-m,+\infty),\ x\in\Omega,
	\end{equation}
supplemented with the boundary conditions (\ref{2}) and the trivial initial data prescribed at $t=-m$:
\begin{equation}\label{2m}
u(-m,x) = 0, \quad u_t(-m,x) = 0,\qquad x\in\Omega.
\end{equation}
Here,  
$$
\begin{array}{cc}
	F_{\bar\al}^m(t,x) =  F_{\bar\al}(t,x) \ \ {\rm if} \ t > -m \quad
	{\rm and} \quad F_{\bar\al}^m(t,x) = 0 \ \ {\rm if} \ t \le -m,
	\\[2mm]
	\left[\Phi_{\bar\al}^mu\right](t,x)=\left[\Phi_{\bar\al}u\right](t,x)\ \ {\rm if} \ t > -m \quad
	{\rm and} \quad  \left[\Phi_{\bar\al}^mu\right](t,x)=0 \ \ {\rm if} \ t \le -m,
\end{array}
$$
and
$\bar{\al}$, being a fixed constant in $(0,\al_2]$, satisfies the smallness conditions of Theorem~\ref{lem41}.

According to Theorem \ref{lem41},  the problem (\ref{1m}), (\ref{2}), (\ref{2m}) in the domain $Q_{(-m,\infty)}$ 
admits a unique bounded  strong solution, denoted by $U^m(t,x)$. Note that this solution depends on~$\bar\al$.
 
 We extend $U^m$ to the whole axis $\R$ putting $U^m(t,x) = 0$ for $t < -m$.
Combining the bounds \reff{uuuu} and \reff{*3} yields the estimate
\beq\label{*4}
\|U^m\|_{L^\infty(\R;H^2(\Omega))}+\|U_t^m\|_{L^\infty(\R;H^2(\Omega))}+\|U_{tt}^m\|_{L^\infty(\R;L^2(\Omega))} \le C,
\ee
where the constant $C>0$ depends on $\al_2$  but is independent of $t\in\R$ and~$m\ge 1$.

The proof is divided into several claims. First, we establish existence  of a bounded weak solution
to the problem (\ref{G0}), \reff{2} with $\al=\bar\al$ in $Q$, as  the limit of the sequence~$(U^m)$ (denoted by $U$).
We then show that it enjoys additional regularity and thus, together with the function $g$ defined in  \reff{44}, qualifies as a bounded strong solution to the problem (\ref{1})--(\ref{3})  
in the sense of Definition~\ref{sol_weak} with $u=U$. The uniqueness result then completes the proof.

An important point to emphasize here is that, although the approximate solutions $U^m$ depend on $\bar{\al}$,  the limit function
$U$ does not.

We begin by introducing the notion of a bounded weak solution
to the problem (\ref{G0}), \reff{2}.

\begin{defn}
\label{def-wR}	 A function $U$  is  called a  {\rm  bounded weak solution} 
to the  problem  (\ref{G0}), \reff{2} with $\al=\bar{\al}$ if
\beq\label{U}
U \in BC(\mathbb R; H_0^1(\Omega)), \ \
U_t \in L^\infty(\mathbb R; H_0^1(\Omega)) \cap BC(\mathbb R; L^2(\Omega)), 
\ \ U_{tt}\in L^\infty(\R;L^2(\Omega)),
\ee
and the following equality holds for  all   $v\in C^1_c(\mathbb R, H_0^1(\Omega))$:
\begin{equation}
 \displaystyle\int\limits_{Q} \Bigl(U_{tt}v +  a^2\nabla U \cdot \nabla v + b(t) \nabla U_t \cdot \nabla v	
 +  \varphi(x,U)v -\left[\Phi_{\bar\al} U\right](t,x)v- F_{\bar \al}(t,x)v\Bigl)\dd x\dd t
   = 0.\label{G100}\end{equation}
   \end{defn}

\begin{claim}\label{convR}
	Let 
	$(U^m(t,x))$ be a sequence of functions  fulfilling the regularity conditions  \reff{u-reg} with $u=U^m$ and the estimates \reff{*4}. Then there exist a subsequence of $(U^m)$ (labeled by $m$ again) and a function
	$U$ fulfilling the regularity conditions \reff{U}
	and the following convergences:
	\beq\label{conv1}
		U^m\to U  \mbox{ in } C([-k,k];H^1(\Omega))\ \mbox{ and } \ U_t^m\to U_t   \mbox{ in }
		C([-k,k];L^2(\Omega))\ \mbox{ for every } k\in\N
		\ee
		and
		\beq\label{conv2}U_{tt}^m(t,\cdot)\rightharpoonup U_{tt}(t,\cdot) \ \mbox{ and } \
		\nabla U_{t}^m(t,\cdot) \rightharpoonup\nabla U_{t}(t,\cdot)    \mbox{ in } L^2(\Omega) \ \mbox{ for almost every } t\in\R.
		\ee		
\end{claim}

\begin{proof} 
The uniform bound \reff{*4} on	$(U_{t}^m)$ implies that the family $\{U^m\}$ is equicontinuous in time  with values in $H^1(\Omega)$. Similarly, the bound
\reff{*4} on	$U_{tt}^m$ ensures that $\{U_t^m\}$ is equicontinuous in time  with values in $L^2(\Omega)$. 
Moreover, since $(U^m(\cdot,t))$ is uniformly bounded in $H^2(\Omega)$ for all $t$ and the embedding $H^2(\Omega)\hookrightarrow H^1(\Omega)$ is compact, the set $\{U^m(\cdot,t)\}$ is relatively compact in $H^1(\Omega)$ for each fixed $t$. Therefore, by the Arzel\`a-Ascoli theorem in Banach spaces, for any fixed $k\in\N$ we can extract a subsequence (not relabeled) such that
\beq\label{conv1-0}
U^m\to U  \mbox{ in } C([-k,k];H^1(\Omega))\ \mbox{ and } \ U_t^m\to U_t   \mbox{ in }
C([-k,k];L^2(\Omega)).
\ee	
for some limit functions $U$ and $U_t$. Passing to the limit in the defining identity for $U_t^m$ shows that $U_t$ coincides with the distributional time derivative of $U$.
A standard diagonal argument with respect to $k$ extends these convergences to all $k \in \N$.

Furthermore, in view of \reff{conv1-0} and since the sequences $(U_{tt}^m)$ and $(\nabla U_{t}^m)$ are  bounded in 
$L^\infty(\R;L^2(\Omega))$, the Banach-Alaoglu theorem yields a subsequence of $(U^m)$ (still denoted by $(U^m)$) such that 
$$
U_{tt}^m\overset{\ast}{\rightharpoonup} U_{tt}\quad 
\mbox{and}\quad \nabla U_{t}^m\overset{\ast}{\rightharpoonup}\nabla U_{t}\ \ \mbox{ in }\ L^\infty(\R;L^2(\Omega)).
$$
In particular, for every $v\in L^1(\R;L^2(\Omega))$, it holds
\beq\label{c1}
\int_{\R}\left(U_{tt}^m(t,\cdot),v(t,\cdot)\right)_{L^2(\Omega)}\dd t\to
\int_{\R}\left(U_{tt}(t,\cdot),v(t,\cdot)\right)_{L^2(\Omega)}\dd t
\ee
and similarly for $\nabla U_{t}^m$, by the duality between  $L^1(\R;L^2(\Omega))$ and $L^\infty(\R;L^2(\Omega))$.

Taking $v(x,t)=\psi(x)\chi(t)$, with $\chi\in L^1(\R)$ and $\psi\in L^2(\Omega)$, in \reff{c1}, and using a standard density argument (based on the separability of $L^2(\Omega)$), we obtain the desired convergences \reff{conv2} for a (possibly further) subsequence of $(U^m)$.
\qed\end{proof}

\begin{claim}
The function $U$ constructed in the proof of Claim \ref{convR} is a bounded weak solution 
to the  problem  (\ref{G0}), \reff{2} with $\al=\bar{\al}$.
\end{claim}

\begin{proof}
According to Definition \ref{def-wR}, it remains to show that the weak formulation~\reff{G1} is true. Our starting point is that, by the definition of $U^m$, for a given $m\in\N$, the function  $U^m$
satisfies the following equalities
for any  $v\in C^1_c(\mathbb R; H^1_0(\Omega))$ (see \reff{G1}):
\beq\label{weakUm} \displaystyle\int_{Q} \Bigl(U^m_{tt}v + a^2\nabla U^m \cdot \nabla v +
b(t)\nabla U^m_{t} \cdot \nabla v	
+ \varphi(x,U^m)v -[\Phi_{\bar\alpha} U^m](t,x)v-F_{\bar\alpha}(t,x)v \Bigl)\dd x \dd t =0.
\ee
By Claim \ref{convR}, we can pass to the limit as $m\to \infty$ in this equality.	
Indeed, for the convergence of the second time derivative term, we use 
the first convergence in \reff{conv2} implying that $U_{tt}^m\rightharpoonup U_{tt}$  in $L^2(Q_{(-k,k)})$ for every $k\in\N$.  Next, the first convergence in \reff{conv1} implies that 
$\nabla U^m\to \nabla U$  in $L^2(Q_{(-k,k)})$ for every $k\in\N$. This justifies the convergence of the gradient term in \reff{weakUm}. For the damping term, we apply the second convergence in \reff{conv2}.

To treat the nonlinear term, note that by Assumption (A2) we have
$$
\left\|\vphi(\cdot,U^m(t,\cdot))-\vphi(\cdot,U(t,\cdot))\right\|_{L^2(\Omega)}\le L_0 \left\|U^m(t,\cdot)-U(t,\cdot)\right\|_{L^2(\Omega)}.
$$ 
Hence, the first convergence in \reff{conv1} implies the desired one
for every $k\in\N$:
$$
\vphi(x,U^m)\to \vphi(x,U)\quad\mbox{ in } C([-k,k];L^2(\Omega)).
$$

Finally, the convergence 
\beq\label{conv-Phi}
\int_Q \left[\Phi_{\bar\al} U^m\right](t,x) v\dd x \dd t \to \int_Q \left[\Phi_{\bar\al} U\right](t,x) v\dd x \dd t
\ee
as $m\to\infty$, now easily follows from the definition~\reff{FPhi}
of~$\Phi_{\alpha}$.  In more detail, without loss of generality, take $v(t,x)=\chi(t)\psi(x)$ with arbitrary fixed
$\psi \in H^1_0(\Omega)$ and $\chi \in C_c(\mathbb R)$. Then, due to Assumption (A4), the function $\chi(t)K(x)$ 
belongs to $C^1_c(\mathbb R; H^1_0(\Omega))$. Moreover, according  to~\reff{FPhi}, the left-hand side of \reff{conv-Phi}
can be written in the form
\begin{eqnarray*}
	& & \int_Q \left[\Phi_{\bar\al} U^m\right](t,x) \psi(x)\chi(t) \dd x \dd t =
	\int_\Omega K_0(x) \psi(x) \dd x \nonumber\\
	& & \times\int_Q \left[a^2\nabla K(\xi) \cdot  \nabla U^m+ b(t)\nabla K(\xi) \cdot \nabla U^m_{t}
	+ K(\xi)\varphi(\xi,U)+\bar\al K(\xi)U^m\right]\chi(t)\dd \xi \dd t.
\end{eqnarray*}
The desired convergence \reff{conv-Phi} now  follows immediately from Claim \ref{convR} and proceeds along the same lines as the convergence of all other terms in \reff{weakUm},  discussed above, but now with $v(t,x)=\chi(t)K(x)$.

Summarizing,  passing to the limit as $m\to\infty$  in \reff{weakUm}, we derive the desired weak formulation \reff{G100}.
\qed\end{proof}

\begin{claim}
	The bounded weak solution $U$  
	to the  problem  (\ref{G0}), \reff{2} with $\al=\bar{\al}$ admits additional regularity, namely
		$$U \in BC(\mathbb R; H_0^1(\Omega)\cap H^2(\Omega)), \
	U_t \in BC(\mathbb R; H_0^1(\Omega)) \cap L^\infty(\mathbb R; H^2(\Omega)), \
U_{tt}\in L^\infty(\R;L^2(\Omega)),$$ 
 and hence satisfies the equation (\ref{G0}) almost everywhere.
\end{claim}

\begin{proof}
Similarly to Step 4 in the  proof of Theorem \ref{T-obm}, the bounded weak solution $U$ satisfies the equality 
\begin{equation} \label{Utt2}
	\Delta (a^2 U + b(t) U_t) =  U_{tt} +  \varphi(x, U)  - \left[\Phi_{\bar\al} U\right](t,x)- F_{\bar\al}(t,x), \quad  x\in\Omega,
\end{equation}
 in a weak sense, namely in the sense of \reff{G100}. 
 Using the notations
 \begin{equation} \label{UUzz}
 	z(t,x) =  a^2 U + b(t) U_t
 \end{equation}
and
	$$Z(x,t) =  U_{tt} +  \varphi(x, U)  - \left[\Phi_{\bar\al} U\right](t,x)- F_{\bar\al}(t,x),$$
	the equation \reff{Utt2} takes the form
	\beq\label{Zx}
	\Delta z= Z(t,x).
	\ee
	By considering the equation with respect to $z(t,\cdot)$ for almost every $t\in\R$ (treated as a parameter), it follows from the regularity properties  \reff{U} that
 	$Z\in L^\infty(\mathbb R, L^2(\Omega))$ and $z\in L^\infty(\mathbb R, H_0^1(\Omega))$.

Moreover, as  $\d\Omega\in C^2$, applying  \cite[Theorem 4, p. 317]{Evans} to \reff{Zx} gives that for almost every 
$t\in \R$ the
solution $z(t,\cdot)$ 
belongs to $H^2(\Omega)$. Furthermore, 
the  estimate~\reff{ZZZ} is satisfied. This implies that $z\in L^\infty(\R;H^2(\Omega))$.

Further, we consider  (\ref{UUzz}) as a linear nonhomogeneous equation with respect to $U(\cdot,x)$ on $\R$, with $x$ treated as a parameter.
Then the bounded  continuous solution to the equation (\ref{UUzz}) is explicitly  given  by the formula
$$ U(t,x) = 
\int_{-\infty}^t  \exp\left\{-a^2\int_\theta^t \frac{\dd\theta_1}{b(\theta_1)}\right\}\frac{z(\theta,x)}{b(\theta)} \dd\theta.$$
This formula holds since, by the assumption  (A1), we have $ b(\theta_1)\ge  \bar b > 0$ for all $\theta_1\in\R$.
Since  $z(t,\cdot)\in H^2(\Omega)$, we conclude  that 
$U\in BC(\mathbb R, H^2(\Omega)).$
Moreover, on the account of $z\in L^\infty(\mathbb R, H^2(\Omega))$ and of (\ref{UUzz}), we get
 that $U_t \in L^\infty(\mathbb R, H^2(\Omega)).$
As a result, we obtain the estimate
$$
\|  U(t,\cdot) \|_{H^2(\Omega)} + \| U_t(t,\cdot) \|_{H^2(\Omega)} \le A_{11}
$$
for some $A_{11}>0$ and  all  $t \in \mathbb R$.
Hence, $U$ satisfies  (\ref{G0}) almost everywhere. 
\qed\end{proof}

\begin{claim}
	The pair of functions $(U,g)$, where $g$ is defined by \reff{44},  is a bounded strong solution 
	to the  problem  (\ref{1})--\reff{3}.
\end{claim}

\begin{proof} On account of Lemma \ref{lemma2.2}, it remains to show that the function $g(t)$, defined by the 
	formula \reff{44}, belongs to $BC(\R)$.  To this end, we  use 
 \cite[Theorem 4, p. 288]{Evans} stating that, if $V \in L^2(s,T;H^2(\Omega))$ with 
$V_t \in L^2(s,T;L^2(\Omega))$, where $T$ is some positive integer,
then $V \in C([s,T];H^1(\Omega))$ after possibly being redefined on a set of measure zero.
Moreover, the  estimate 
$$\max_{t \in [s,T]}\| V(t)\|_{H^1(\Omega)} \le C\left( \| V\|_{L^2( s,T;H^2(\Omega))} +
\| V_t\|_{L^2( s,T;L^2(\Omega))}\right),$$
holds, where the constant $C$ depends only on $s$, $T$, and $\Omega$. 
Since  $U_t \in L^\infty(\mathbb R;H^2(\Omega))$ and $U_{tt} \in L^\infty(\mathbb R;H^2(\Omega)),$
it follows that $U_t \in C([k,k+2];H^1(\Omega))$ for every $k \in \Z$, with a uniform bound on  $\|U_t(\cdot,t)\|_{H^1(\Omega)}$ independent of  $k$. Since the countable family of overlapping intervals
$[k,k+2]$, $k\in\Z$, covers $\R$,  we conclude that  $U_t \in BC(\mathbb R;H^1(\Omega)).$

Now, since all terms in (\ref{44}) are continuous functions
	on $\R$, the function  $g$ belongs to $C(\R)$, as desired.
\qed\end{proof}

\begin{claim}
The bounded strong solution $(U,g)$  
to the  problem  (\ref{1})--\reff{3} is unique.
\end{claim}

\begin{proof}
Let  $(\widetilde U,\tilde g)$ be another  bounded strong solution to the problem (\ref{1})--(\ref{3})
in~$Q$. Then, accordingly to Lemma \ref{lemma2.2}, the function  
$\widetilde U$ satisfies the equation \reff{4}  almost everywhere in $Q$
for some $\al>0$, and due to Remark \ref{all-al}, it satisfies \reff{4} 
for all $\al>0$, in particular, for $\al=\bar\al$. Hence, the function
$V=U-\widetilde U$ fulfills the equation 
     \begin{eqnarray*}
    V_{tt} - a^2\Delta V - b(t) \Delta V_t
			+  \varphi(x,U)-\varphi(x,\widetilde U) 
		 =[\Phi_{\bar\al} U](t,x)- [\Phi_{\bar\al} \widetilde{U}](t,x)   
    \end{eqnarray*}
almost everywhere in $Q$.
Write
	\begin{eqnarray*} 
		 \widetilde W_\eta(t) =\widetilde{\mathcal E}(t) + \eta \left( \int_\Omega V V_t \dd x + \frac{b(t)}{2}   \int_\Omega   \|\nabla V\|^2  \dd x \right),
	\end{eqnarray*}
where
$$
	\widetilde{\mathcal E}(t) =  \frac{1}{2}\int_\Omega |V_{t}|^2\dd x
+  \frac{ a^2}{2}\int_\Omega\|\nabla V\|^2 \dd x
$$
	and $\eta$ is a positive parameter satisfying \reff{eta} and \reff{B1}. This ensures that $\widetilde W_\eta(t) \ge 0$ for all $t \in \mathbb R.$
	As $\bar\al$   fulfills  conditions  of Theorem \ref{lem41}, then
	following  the proof of  Theorem~\ref{lem41},   we conclude that 
	$\widetilde W_\eta(t)$ satisfies the estimate
	\begin{eqnarray*}
	\displaystyle	\widetilde W_\eta(t) \le e^{{-\eta_1}(t-s)}	\widetilde W_\eta(s) \quad \mbox{for all } t \ge s
	\end{eqnarray*}
	with the same $\eta_1$ as in \reff{WN-upper}.
	Since for  any  $t\in\R$ and  any $\varepsilon>0$ there exists $s<t$ such that $e^{-{\eta}(t-s)} \le \varepsilon$, it follows that
	 $\widetilde W_\eta(t)  \equiv 0$  for all $t \in \mathbb R.$
	Consequently,  
$$
\|V_{t}(t,\cdot)\|^2_{L^2(\Omega)} +
 \|\nabla V(t,\cdot)\|^2_{L^2(\Omega)}  = 0\quad \mbox{for all } t\in\R,
 $$
  and     hence $V= 0$, as desired.
\qed\end{proof}
The proof of Claim 1 of Theorem \ref{main} is now complete.

\subsection{Bohr almost periodic solutions} \label{ap}

Let us recall the definition of a Bohr almost periodic function as given in \cite{Cord}.
 Let $(Y,\|\cdot\|_{Y})$ and $(Z,\|\cdot\|_{Z})$ be  Banach
spaces. A continuous function $f: \mathbb R \times Y \to Z$ is said to be {\it Bohr almost periodic} in $t\in\R$
uniformly in $x\in Y$ if for every
$\varepsilon > 0$ and every compact set ${\cal K} \subset Y$ there exists a number $l=l(\varepsilon,{\cal K}) > 0$ such
that any interval of length~$l$ contains a real number  $h$ (called an $\eps$-almost period)  such
that
$$\| f(t + h,y) - f(t,y) \|_Z < \varepsilon$$
for all $t \in  \mathbb R$ and all $y \in {\cal K}$.

Let $U$ be the bounded strong solution provided by Theorem \ref{main} (1). Accordingly to Lemma \ref{lemma2.2}, $U$ satisfies \reff{4}
for some $\al>0$.
 To prove that  $U$
is Bohr almost periodic in $t$, let $h$ be a common $\varepsilon$-almost period of functions $E(t), F_\al(t,x)$ and $b(t).$
Then the function 
$$
w(t,x) = U(t + h,x) - U(t,x)
$$ 
satisfies the following equation in the sense of Definition \ref{sol_weak}:
$$	\begin{array}{cc} 
		 w_{tt} - a^2\Delta w - b(t) \Delta w_t
		- (b(t + h) - b(t)) \Delta  U_t(t+h,x)) 	
		 \\[3mm]	=  \varphi(x,U(t,x))- \varphi(x,U(t + h,x))	
		 + F_\al(t+h,x)-F_\al(t,x)+[\Phi_\al U](t+h,x)- [\Phi_\al U](t,x).
	\end{array}$$
Setting 
	\begin{eqnarray*} 
	 W_h(t) &=& \mathcal E_h(t) + \eta \left( \int_\Omega w(t,x) w_t(t,x) \dd x +
	\frac{b(t)}{2}   \int_\Omega   \|\nabla w(t,x)\|^2  \dd x \right), \\
			 \mathcal E_h(t)& =&  \frac{1}{2}\int_\Omega |w_{t}(t,x)|^2\dd x
		+  \frac{ a^2}{2}\int_\Omega\|\nabla w(t,x)\|^2 \dd x,
	\end{eqnarray*}
and following the proof of  Theorem~\ref{lem41}, we obtain the following analog of the  inequality~\reff{WN-upper}:
	\begin{eqnarray*}
	 \displaystyle W_h(t) \le e^{{-\eta_1}(t-s)} W_h(s) +A_{11}\sup_{t}\left(\|F_\al(t+h,\cdot) - F_\al(t,\cdot)\|^2_{L^2(\Omega)} + |b(t + h) - b(t)|\right),
	\end{eqnarray*}
	where positive constants $\eta$,  $\eta_1$,  and $A_{11}$ are independent of $h$ and $t.$
	For any  $t\in\R$ and   $\varepsilon>0$,  there exists $s\le t$ such that $e^{-{\eta}(t-s)} \le \varepsilon.$
	This implies that
	$$\|U_t(t+h,\cdot) - U_t(t,\cdot)\|_{L^2(\Omega)} + \|U(t + h,\cdot) - U(t,\cdot)\|_{H^1(\Omega)}
	\le A_{12}\varepsilon$$
	for  some constant $A_{12}>0$,  independent of $h$, $t$, and $\eps$.
Consequently, $U(t,x)$ is almost periodic in $t$ with values in
 $H_0^1(\Omega)$, while  $U_t(t,x)$ is 
almost periodic  in $t$ with values in~$L^2(\Omega)$. 
	
The Bohr almost periodicity of   $g(t)$ follows from the 
formula (\ref{44}) with $u=U$, written as follows:
\beq\label{44ap}
 \begin{array}{rcl}
	    g(t)& =&\displaystyle  \left(\int_{\Omega}K(x)f_1(x)\dd x\right)^{-1}
	\bigg[E''(t) + \int_\Omega \Bigl( a^2\nabla K(x) \cdot  \nabla U \\ [3mm]
                 && - b(t)\Delta K(x) \cdot U_{t}
	+ K(x)\varphi(x,U) - K(x)f_2(x,t)\Bigl)\dd x\bigg]\ \mbox{for all }  t\in \mathbb{R}.
\end{array}
\ee
Indeed, since $E\in BC^3(\mathbb R)$, its second-order derivative  $\frac{\dd^2}{dt^2}E$ is almost periodic accordingly to~\cite[Theorem 1.8, p. 13]{Cord}. Combining this with the Bohr almost periodicity of 
$U_{t}$ and $\nabla U$, 
        the almost periodicity of $g$ then follows directly from  (\ref{44ap}).
        
        \subsection{ Periodic solutions} \label{per}
 To prove that, if the data of the original problem are $\om$-periodic, then the solution $U$ is also $\om$-periodic, we follow the proof of the almost periodicity of $U$ given in Section \ref{ap}, replacing almost periodicity by periodicity with $h=\om$ at each step. Note that the $BC^2(\mathbb R)$-regularity of the function $E$
(in contrast to the  $BC^3(\mathbb R)$-regularity required in the almost periodic case) is sufficient.
  
The proof of Claim 2, and thus of Theorem~\ref{main}, is complete.

\begin{rem}\rm
	As shown in the proof of Theorem~\ref{main}, the conditions of the theorem ensure the exponential stability of the obtained periodic and almost periodic solutions.
\end{rem}
 
 \section{Consequences and further extensions of our results}\label{extensions}
 
 In the course of our analysis, we also obtained several additional results on 
 inverse initial-boundary value problems for the strongly damped wave equation in bounded and unbounded  subdomains of $Q$. To formulate these results, we first introduce the corresponding notions of strong solutions.
 
 \begin{defn} \label{sol_strong} Let $s, T\in\R$ be arbitrary real numbers  satisfying  $s< T$.  
 	A pair of functions $(u(x,t),g(t))$  is  called a   {\rm   strong solution} in $Q_{(s,T)}$ (respectively, a   {\rm   bounded strong solution} in $Q_{(s,\infty)}$)  to the problem (\ref{1}), \reff{2}, (\ref{3}), \reff{2p}  if it satisfies the regularity conditions \reff{ug-reg},
 	with $\R$ replaced by $(s,T)$ (respectively, by $[s,\infty)$), the function $g(t)$ is determined by \reff{44}, and the function
 	$u$ fulfills the equations (\ref{1}) and \reff{2p} almost everywhere, as well as the overdetermination condition
 	\reff{3} pointwise.
 \end{defn}
 
Next, we present an equivalent formulation of the problem in the respective domains.
 
 \begin{lemma}\label{lem: equiv1}
 	A pair of functions $(u(x,t), g(t))$  is  a strong solution  in $Q_{(s,T)}$ (respectively, a    bounded strong solution in $Q_{(s,\infty)}$) 
 	to the problem   (\ref{1}), \reff{2}, (\ref{3}), \reff{2p} if and only if 
 	it satisfies the regularity conditions \reff{ug-reg},
 	with $\R$ replaced by $[s,T]$ (respectively, by $[s,\infty)$), the function $g(t)$ is determined by \reff{44}, and 
 	$u$ fulfills the equations (\ref{1}) and \reff{2p} almost everywhere.
 \end{lemma}

\begin{proof}
	The proof proceeds along the same lines as that of Lemma \ref{lemma2.2} with $\al=0$, with a minor modification in the sufficiency part. In this case, the equation \reff{E-E} with $\al=0$ yields $E^*(t) - E(t)=c_1t+c_2$, for some constants
	$c_1,c_2\in\R$. Since $u$ and $u^*$
	satisfy the same initial  conditions \reff{2p} and the same overdetermination condition \reff{3} (together with its differentiated form), it follows that $E^*(t) \equiv  E(t)$, as desired.
\qed	\end{proof}

Finally, we formulate the main result of this section.
 
\begin{thm}
	\label{T-obm-neobm}
	Let $s, T\in\R$ be arbitrary real numbers  satisfying  $s< T$. Moreover,  let 	$u_0\in H_0^1(\Omega)\cap H^2(\Omega)$,
	$u_1 \in H_0^1(\Omega)\cap H^2(\Omega)$, and   Conditions {(A1)}--{(A5)} be satisfied.  Then the following statements hold.
	\begin{itemize}
		\item[1.]
		The problem    (\ref{1}), \reff{2}, (\ref{3}), \reff{2p}  in $Q_{(s,T)}$
		has a unique  strong solution $(u,g)$.
				\item[2.]
				If $L_0$,  $b_1$, and $\|\nabla K\|_{L^2(\Omega)}$
				are sufficiently small,  the problem   (\ref{1}), \reff{2}, (\ref{3}), \reff{2p} in $Q_{(s,\infty)}$ 	 has a  unique bounded strong solution $(u,g)$.
	\end{itemize}
\end{thm}
\begin{proof}
	The first part follows from Lemma \ref{lem: equiv1} and Theorem \ref{T-obm} with $\alpha = 0$, and
	 the second part from Lemma \ref{lem: equiv1} and Theorem \ref{lem41} with $\alpha = 0$.
\qed\end{proof}

Note that Theorem \ref{T-obm-neobm} implies the result on weak solvability of the inverse problem (\ref{1}), (\ref{2}), (\ref{3}), (\ref{2p}) in $Q_{(s,T)}$ obtained in \cite{Aitzhanov,Protsakh3}. 
 
 \section*{Acknowledgment}
 Viktor Tkachenko was supported by the project 2025.07/0049 ``Complex dynamical systems: reduced order modelling in nature and society''
 of the National Research Foundation of Ukraine and by  Simons Foundation grant  (SFI-PD-Ukraine-00014586, V.T.).

\end{document}